\theoremstyle{plain}
\newtheorem{prop}{Proposition}
\newtheorem{lem}[prop]{Lemma}
\theoremstyle{definition}
\newtheorem{ex}[prop]{Example}
\newenvironment{psmallmatrix}
  {\left(\begin{smallmatrix}}
  {\end{smallmatrix}\right)}
\newcommand{\sig}{\mathrm{sig}}
\newcommand{\Z}{\mathbb{Z}}
\newcommand{\Q}{\mathbb{Q}}
\newcommand{\C}{\mathbb{C}}
\renewcommand{\H}{\mathbb{H}}
\newcommand{\SL}{\mathrm{SL}}
\newcommand{\Mp}{\mathrm{Mp}}
\newcommand{\e}{\mathfrak{e}}
\newcommand{\AL}{\mathrm{AL}}
\newcommand{\even}{\mathrm{even}}
\newcommand{\odd}{\mathrm{odd}}
\begin{document}

\title{Twisted component sums of vector-valued modular forms}

\author{Markus Schwagenscheidt}
\address{Mathematical Institute, University of Cologne, Weyertal 86-90, 50931 Cologne, Germany}
\email{mschwage@math.uni-koeln.de}

\author{Brandon Williams}
\address{Fachbereich Mathematik \\ Technische Universit\"at Darmstadt \\ 64289 Darmstadt, Germany}
\email{bwilliams@mathematik.tu-darmstadt.de}

\subjclass[2010]{11F27}

\thanks{We thank Jan H. Bruinier and Stephan Ehlen for helpful discussions. M. Schwagenscheidt is supported by the SFB-TRR 191 \lq Symplectic Structures in Geometry, Algebra and Dynamics\rq, funded by the DFG. B. Williams is supported by the LOEWE research unit Uniformized Structures in Arithmetic and Geometry.}

\maketitle

\begin{abstract}
	We construct isomorphisms between spaces of vector-valued modular forms for the dual Weil representation and certain spaces of scalar-valued modular forms in the case that the underlying finite quadratic module $A$ has order $p$ or $2p$, where $p$ is an odd prime. The isomorphisms are given by twisted sums of the components of vector-valued modular forms. Our results generalize work of Bruinier and Bundschuh to the case that the components $F_{\gamma}$ of the vector-valued modular form are antisymmetric in the sense that $F_{\gamma} = -F_{-\gamma}$ for all $\gamma \in A$. As an application, we compute restrictions of Doi-Naganuma lifts of odd weight to components of Hirzebruch-Zagier curves.
\end{abstract}

\section{Introduction}

In the study of theta lifts (such as Maass lifts and Borcherds products) it is convenient to work with vector-valued modular forms for the dual Weil representation $\rho^{*}$ associated to a finite quadratic module $(A,Q)$. Therefore, it is useful to understand the precise relationship between vector-valued modular forms for $\rho^{*}$ and scalar-valued modular forms for congruence subgroups. 

For example, in some cases there are isomorphisms between spaces of vector-valued and scalar-valued modular forms. In \cite{BB}, Bruinier and Bundschuh showed that if $|A| = p$ is an odd prime, then modular forms for $\rho^{*}$ of weight $k \in \Z$ with $k \equiv \sig(A,Q)/2 \pmod 2$ can be identified with certain modular forms of weight $k$ for $\Gamma_{0}(p)$ and Nebentypus $\chi_{p} = \big( \frac{\cdot}{p}\big)$. The isomorphism is given by the component sum
\[
\varphi\left(\sum_{\gamma \in A}F_{\gamma}(\tau)\e_{\gamma}\right) = \sum_{\gamma \in A}F_{\gamma}(p\tau)
\]
of the vector-valued modular form $F(\tau) =\sum_{\gamma \in A}F_{\gamma}(\tau)\e_{\gamma}$, where $\e_{\gamma}$ denotes the standard basis of the group algebra $\C[A]$. Using similar ideas, Y. Zhang constructed isomorphisms between spaces of vector-valued and scalar-valued modular forms for certain classes of finite quadratic modules which do not necessarily have odd prime order (see \cite{Z1,Z2}).

The condition $k\equiv \sig(A,Q)/2 \pmod 2$ turns out to be crucial for the aforementioned result of Bruinier and Bundschuh, since otherwise the components of any modular form for $\rho^{*}$ satisfy $F_{-\gamma} = -F_{\gamma}$ and hence cancel out in pairs in the sum. To obtain a non-zero map in any weight, we twist the component sums of vector-valued modular forms by a Dirichlet character $\chi$ mod $p$ with $\chi(-1) = (-1)^{k+\sig(A,Q)/2}$. Suppose that $|A| = p$ with an odd prime $p$. We define the \textbf{twisted component sum} of a modular form $F(\tau) = \sum_{\gamma \in A}F_{\gamma}(\tau)\e_{\gamma}$ for $\rho^{*}$ by 
\[
\varphi_{\chi}\left(\sum_{\gamma \in A}F_{\gamma}(\tau)\e_{\gamma} \right) = \sum_{\gamma \in A}\chi(\gamma)F_{\gamma}(p\tau),
\]
where we fix an identification of $A$ with $\Z/p\Z$ to define $\chi(\gamma)$ for $\gamma \in A$. The assumptions on $k$ and $\chi$ imply that $\varphi_{\chi}$ is not trivially zero. Moreover, we have the following result.

\begin{prop}
	The map $\varphi_{\chi}$ defines an injective homomorphism from the space of modular forms of weight $k$ for $\rho^{*}$ to the space of scalar-valued modular forms of weight $k$ for $\Gamma_{0}(p^{2})$ with Nebentypus $\chi\otimes \chi_{p}$.
\end{prop}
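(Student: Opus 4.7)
The plan is to verify the modular transformation of $\varphi_\chi(F)$ on $\Gamma_0(p^2)$, check holomorphy at the cusps, and then prove injectivity by inspecting Fourier coefficients. The main input is the transformation of each component $F_\gamma$ under $\Gamma_0(p)$, which (using that $p$ divides the level of $(A,Q)$) comes from the Weil representation: for $M=\begin{psmallmatrix}a&b\\cp&d\end{psmallmatrix}\in\Gamma_0(p)$ one has
\[
F_\gamma(M\tau) = (c\tau+d)^k\,\chi_p(d)\,F_{a\gamma}(\tau),
\]
where $\chi_p = \bigl(\tfrac{\cdot}{p}\bigr)$ is the character of $\rho^*$ on $\Gamma_0(p)$ in the present signature. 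This is essentially the same formula that underlies the computation of \cite{BB}.

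For $\tilde M=\begin{psmallmatrix}a&b\\cp^2&d\end{psmallmatrix}\in\Gamma_0(p^2)$, the identity $p\cdot\tilde M\tau = M'(p\tau)$ with $M'=\begin{psmallmatrix}a&pb\\cp&d\end{psmallmatrix}\in\Gamma_0(p)$ reduces the transformation of $\varphi_\chi(F)$ to that of the $F_\gamma(p\tau)$. Applying the formula above to each component, summing with weights $\chi(\gamma)$, and re-indexing $\beta = a\gamma$ in the resulting sum (with $\chi(a^{-1}\beta) = \chi(d)\chi(\beta)$ since $ad\equiv 1\pmod p$) yields
\[
\varphi_\chi(F)(\tilde M\tau) = (cp^2\tau+d)^k\,\chi_p(d)\chi(d)\,\varphi_\chi(F)(\tau),
\]
which is the claimed nebentypus $\chi\otimes\chi_p$. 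Holomorphy at $\infty$ is immediate from the Fourier expansions of the $F_\gamma$; at the cusps $0$ and $1/p$ of $\Gamma_0(p^2)$ one applies the $\SL_2(\Z)$-transformation of $F$ (via the finite unitary matrix $\rho^*(S)$) to express the resulting expansions as finite linear combinations of the holomorphic $q$-expansions of the $F_\beta$.

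For injectivity, write $F_\gamma(\tau) = \sum_{n\in Q(\gamma)+\Z}a_F(\gamma,n)q^n$, giving
\[
\varphi_\chi(F)(\tau) = \sum_{m\in\Z}\Bigl(\sum_{\substack{\gamma\in A\\ pQ(\gamma)\equiv m\pmod p}} \chi(\gamma)\,a_F(\gamma, m/p)\Bigr) q^m.
\]
The assumption that $k+\sig(A,Q)/2$ is odd forces the components to be antisymmetric, so $F_0=0$ automatically (and $\chi(0)=0$ anyway). For each nonzero $\gamma_0\in A$, the congruence $pQ(\gamma)\equiv pQ(\gamma_0)\pmod p$ has only the two solutions $\pm\gamma_0$, and combining antisymmetry with $\chi(-1)=-1$ collapses the inner sum to $2\chi(\gamma_0)a_F(\gamma_0, m/p)$ whenever $m\equiv pQ(\gamma_0)\pmod p$. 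Hence $\varphi_\chi(F)\equiv 0$ forces every $a_F(\gamma_0, n)$ with $\gamma_0\neq 0$ to vanish, and therefore $F\equiv 0$. The most delicate step is the first one: correctly identifying the Weil-representation character on $\Gamma_0(p)$ as $\chi_p$ and tracking the index shift so that the twist produces $\chi(d)$ rather than $\chi(a)=\chi(d)^{-1}$; once this bookkeeping is pinned down, the rest of the argument is a direct twisted analogue of the Bruinier-Bundschuh computation.
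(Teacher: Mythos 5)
Your treatment of the transformation law is essentially the paper's own argument: you conjugate $\tilde M\in\Gamma_0(p^2)$ into a matrix of $\Gamma_0(p)$ via $p\cdot\tilde M\tau=M'(p\tau)$, invoke the formula $\rho^*(M')\e_\gamma=\chi_p(d)\e_{d\gamma}$ (Borcherds, Theorem 5.2), and re-index to pick up the factor $\chi(d)$; your bookkeeping ($F_{a\gamma}$ with $a\equiv d^{-1}\pmod p$, and $\chi(a^{-1}\beta)=\chi(d)\chi(\beta)$) matches the paper's, which writes the same step as $F_{d^{-1}\gamma}$. The cusp discussion and the disjoint-support observation underlying injectivity are also the same in substance. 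Two harmless slips: the exponents of $F_\gamma$ lie in $\Z-Q(\gamma)$ rather than $Q(\gamma)+\Z$ (so the support condition on $\varphi_\chi(F)$ is $n\equiv-\alpha\gamma^2\pmod p$, i.e.\ $\chi_p(-n)=\epsilon$), and $\Gamma_0(p^2)$ has $p+1$ inequivalent cusps, not just $\infty$, $0$, $1/p$; neither affects the argument.

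There is, however, a genuine gap in your injectivity proof: you assume that $k+\sig(A,Q)/2$ is odd (``forces the components to be antisymmetric, so $F_0=0$ automatically''). The proposition is not restricted to antisymmetric weight; the only hypothesis is $\chi(-1)=(-1)^{k+\sig(A,Q)/2}$, which in the symmetric case is satisfied by even (for instance trivial) $\chi$, and the paper explicitly includes this case. Since the twisted component sum omits the zero component, your coefficient comparison only yields $F_\gamma=0$ for all $\gamma\neq 0$; in the symmetric case you must still rule out a nonzero $F$ supported on $\e_0$ alone. The paper closes this by observing that if $F_\gamma=0$ for all $\gamma\neq 0$, then $F_0|_kS=\frac{\mathbf{e}(\sig(A,Q)/8)}{\sqrt{p}}F_0$, and applying $|_kS$ a second time (using $S^2=Z$) gives $(-1)^kF_0=\frac{\mathbf{e}(\sig(A,Q)/4)}{p}F_0$, which forces $F_0=0$ because the two scalars have absolute values $1$ and $1/p$. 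Adding this step (or any argument showing that no nonzero form in $A_k(\rho^*)$ is supported on $\e_0$, e.g.\ because $\rho^*(S)\e_0$ has nonzero coefficient along every $\e_\beta$) makes your proof cover the full statement; as written it only proves injectivity for antisymmetric weights.
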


For the proof we refer to Proposition~\ref{prop varphi injective} below. It is immediate from the construction that the $n$-th Fourier coefficient of $\varphi_{\chi}(F)$ vanishes unless $n \in p(\Z-Q(\gamma))$ for some $\gamma \in A \setminus \{0\}$. However, $\varphi_{\chi}$ is in general not surjective onto the subspace defined by this vanishing condition. We characterize the image of $\varphi_{\chi}$ in terms of the Atkin-Lehner involution in Proposition~\ref{prop isomorphism p} below. 

We construct an analogous map $\varphi_{\chi}$ in the case that $|A| = 2p$ is twice an odd prime $p$, see Proposition~\ref{prop isomorphism 2p}; in this case, $(A,Q)$ must have odd signature and all modular forms are of half-integral weight.

As an application, we compute restrictions of Doi-Naganuma lifts of odd weight to components of Hirzebruch-Zagier curves $T_{\ell}$ of prime index $\ell$. Let $K = \Q(\sqrt{p})$ with a prime $p \equiv 1 \pmod 4$ and let $\mathcal{O}_{K}$ be its ring of integers. Recall that the Doi-Naganuma lift maps a vector-valued cusp form $F$ of weight $k$ for the dual Weil representation associated to the lattice $(\mathcal{O}_{K},-N_{K/\Q})$ to a Hilbert cusp form $\Phi_{F}$ of weight $k$ for $\SL_{2}(\mathcal{O}_{K})$. The restriction of $\Phi_{F}$ to a component of the Hirzebruch-Zagier curve $T_{\ell}$ of prime index $\ell$ is given by the Shimura lift of the vector-valued cusp form of weight $k+1/2$ for the dual Weil representation of the lattice $(\Z,-\ell x^{2})$ obtained by the so-called theta contraction of $F$ as defined in \cite{M}, i.e. we have the commutative diagram
\begin{center} \begin{tikzpicture}[node distance=2.5cm, auto]
\node (A) {$S_{k}(-N_{K/\Q})$};
\node (AA) [right of=A] {$ $};
\node (B) [right of=AA] {$S_k(\SL_{2}(\mathcal{O}_{K}))$};
\node (C) [below of=A] {$S_{k+1/2}((-2\ell))$};
\node (D) [below of=B] {$S_{2k}(\Gamma_{0}(\ell))$};
\draw[->] (B) to node {\text{restr. to $T_{\ell}$}} (D);
\draw[->] (A) to node {\text{theta contr.}} (C);
\draw[->] (A) to node {$\mathrm{Doi-Naganuma}$} (B);
\draw[->] (C) to node {$\mathrm{Shimura}$} (D);
\end{tikzpicture}
\end{center}
For a proof, see Lemma~\ref{lem doi-naganuma-shimura}. We show that, on the level of the corresponding scalar-valued modular forms, the theta contraction basically becomes a multiplication by the Jacobi theta function. In this way, passing to scalar-valued modular forms makes it easier to compute the restriction of $\Phi_{F}$ to a component of $T_{\ell}$. To illustrate the result, we consider the case $\ell = p$ in the introduction.

\begin{prop}
Let $\chi$ be a Dirichlet character mod $p$ with $\chi(-1) = (-1)^{k}$. We have the following commutative diagram:
\begin{center} \begin{tikzpicture}[node distance=2.5cm, auto]
\node (A) {$S_{k}(-N_{K/\Q})$};
\node (AA) [right of=A] {$ $};
\node (B) [right of=AA] {$S_k(\Gamma_{0}(p^{2}),\chi\otimes \chi_{p})$};
\node (C) [below of=A] {$S_{k+1/2}((-2\ell))$};
\node (D) [below of=B] {$S_{k+1/2}(\Gamma_{0}(p^{2}),\chi)$};
\draw[->] (B) to node {$G \mapsto \big(G(4p\tau)\cdot \vartheta(\tau)\big)|U_{p}$} (D);
\draw[->] (A) to node {$\mathrm{theta}\, \mathrm{contr.}$} (C);
\draw[->] (A) to node {$\varphi_{\chi}$} (B);
\draw[->] (C) to node {$\varphi_{\chi}$} (D);
\end{tikzpicture}
\end{center}
where $\vartheta(\tau) = \sum_{n \in \Z}q^{n^{2}}$ is the Jacobi theta function and $U_{p}$ is the usual Hecke operator acting on Fourier expansions by $\big(\sum_{n}c(n)q^{n}\big)|U_{p} = \sum_{n}c(pn)q^{n}$.
\end{prop}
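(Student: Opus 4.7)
Set $L = (\mathcal{O}_K, -N_{K/\Q})$ and $A = L'/L \cong \Z/p\Z$, and expand
\[
F(\tau) = \sum_{\gamma \in A}\sum_{n} c(\gamma, n)\,q^n\,\e_{\gamma},
\]
where $c(\gamma, n) = 0$ unless $n + Q(\gamma) \in \Z$. All four arrows of the diagram are $\C$-linear in the coefficients $c(\gamma, n)$, so it suffices to compute the $q^N$-coefficient of the output along each path and verify they agree.

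Along the top-right path, applying $\varphi_{\chi}$ gives $\sum_{\gamma}\chi(\gamma)\sum_n c(\gamma, n)\,q^{pn}$; substituting $\tau \mapsto 4p\tau$, multiplying by $\vartheta(\tau) = \sum_{\mu \in \Z}q^{\mu^2}$, and applying $U_p$ yields
\[
\bigl(\varphi_{\chi}(F)(4p\tau)\,\vartheta(\tau)\bigr)\big|U_p \;=\; \sum_{\gamma \in A}\chi(\gamma)\sum_{n, \nu \in \Z}c(\gamma, n)\,q^{4pn + p\nu^2},
\]
since the condition $p \mid 4p^2 n + \mu^2$ imposed by $U_p$ forces $\mu = p\nu$. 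Along the left-bottom path, one first applies the theta contraction of \cite{M} with respect to a vector $v \in \mathcal{O}_K$ with $N_{K/\Q}(v) = p$ (such $v$ exists because $p \equiv 1 \pmod 4$ makes $-1$ a square mod $p$), producing $H \in S_{k+1/2}((-2p))$. By the explicit description in \cite{M}, the components $H_\beta$, for $\beta \in \Z/2p\Z$, have Fourier coefficients that are sums of the $c(\gamma, n)$ indexed by the orthogonal decomposition $\gamma = \nu v^* + \gamma^\perp$, where $v^* = -v/(2p) \in (\Z v)'$ and $\gamma^\perp \in (v^\perp)'$. Applying the analogous $\varphi_{\chi}$ for the $|A| = 2p$ case then yields a scalar $q$-series.

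The heart of the argument is matching the two resulting expressions. Geometrically, the integer $\nu$ in the orthogonal decomposition of $\gamma$ along $\Q v \oplus v^\perp$ is precisely the $\vartheta$-index appearing on the scalar side, and the exponent $4pn + p\nu^2$ corresponds to the Fourier index of $H_\beta$ at the pair $(\gamma, n)$ after the scaling by $p$ from $\varphi_\chi$. The character compatibility reduces to the observation that $\chi(\beta)$ for $\beta \in \Z/2p\Z$ depends only on $\beta \bmod p$, which matches $\chi(\gamma)$ via the identification $A \cong \mathcal{O}_K / \sqrt{p}\,\mathcal{O}_K \cong \Z/p\Z$. I expect the main obstacle to be exactly this bookkeeping: tracking the factor $4p$ in the substitution, the normalization $v^* = -v/(2p)$, and the reduction $\Z/2p\Z \twoheadrightarrow \Z/p\Z$, and checking that they align correctly so that the two sums coincide term-by-term.
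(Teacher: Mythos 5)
Your computation of the top--right path is correct and is exactly how the paper proceeds: since $p \mid 4p^{2}n$ for every exponent of $\varphi_{\chi}(F)(4p\tau)$, the $U_{p}$-condition forces $\mu = p\nu$ and one gets $\sum_{\gamma}\chi(\gamma)\sum_{n,\nu}c(\gamma,n)q^{4pn+p\nu^{2}}$. But your argument stops precisely where the content of the proposition begins. You never write down the Fourier expansion of $\varphi_{\chi}(\Theta F)$ and never perform the matching; you only state that you ``expect'' the bookkeeping to align. The paper has the explicit contraction formula $\tilde c(n,a)=\sum_{r\equiv ab\,(2\ell)}c\big(n-\tfrac{r^{2}}{4\ell p},\gamma_{a,r}\big)$ at its disposal, and the entire proof consists of checking that for $\ell=p$, $b=p$, $\lambda=\varepsilon\sqrt{p}$ the sum runs over the single class $r=p\nu$ with $\nu\equiv a\pmod 2$ (so no factor $2$ appears), that the exponent shift is $r^{2}/4p^{2}=\nu^{2}/4$, giving $4p(n+\nu^{2}/4)=4pn+p\nu^{2}$, and, crucially, that $\chi(\gamma_{a,p\nu})=\chi(a)$ under the identification $a\leftrightarrow \frac{a(1+\sqrt{p})}{2\lambda}$ fixed in Section~\ref{sec Doi-Naganuma} (equivalently $\chi(a/\lambda):=\chi(2a)$). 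That the diagram commutes with constant exactly $1$ and with the \emph{same} character on both vertical arrows is not automatic: in the companion case $\ell\neq p$ the identical bookkeeping produces the constant $2\psi_{\ell}(b)$ and forces the conjugate character $\overline{\psi_{p}}$ on the left arrow. Hence the normalization of the identification $\mathcal{O}_{K}^{\#}/\mathcal{O}_{K}\cong\Z/p\Z$ genuinely matters, and your appeal to ``the'' identification $\mathcal{O}_{K}/\sqrt{p}\,\mathcal{O}_{K}\cong\Z/p\Z$ leaves open a possible constant or character discrepancy; settling it is the whole point, not an afterthought.

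A secondary problem is that your description of the theta contraction has the two directions interchanged. With $v=\lambda=\varepsilon\sqrt{p}$, the $v$-component of $\gamma$ (your $\nu v^{*}$) is what labels the component $\beta\in\Z/2p\Z$ of $\Theta F$, while the theta index comes from translating along $v^{\perp}=\Z\varepsilon$, a rank-one lattice with Gram matrix $(2)$ whose theta series is exactly the $\vartheta(\tau)=\sum_{r}q^{r^{2}}$ in the statement; it is not ``the integer $\nu$ in the orthogonal decomposition of $\gamma$.'' (Also, the existence of a totally positive $\lambda$ of norm $p$ comes from $N(\varepsilon)=-1$ for the fundamental unit of $\Q(\sqrt{p})$, $p\equiv 1\pmod 4$, not merely from $-1$ being a square mod $p$.) So while your strategy is the same coefficient-by-coefficient comparison the paper uses, the proposal as written does not yet prove the statement.
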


We refer to Proposition~\ref{prop theta contraction} for the general statement and its proof. We also give two numerical examples illustrating the use of the above proposition in Section~\ref{sec Doi-Naganuma}.

The work is organized as follows. We start with preliminaries about modular forms for the Weil representation associated to a finite quadratic module. In Section~\ref{sec isomorphisms}, we investigate twisted component sums of vector-valued modular forms and obtain isomorphisms between spaces of vector-valued and scalar-valued modular forms in the case that the underlying finite quadratic module has order $p$ or $2p$, with an odd prime $p$. Finally, in Section~\ref{sec Doi-Naganuma}, we explain how these isomorphisms can be used to compute restrictions of Doi-Naganuma lifts of odd weight to components of Hirzebruch-Zagier curves.

\section{Modular forms for the Weil representation}

A \textbf{finite quadratic module} $(A,Q)$ consists of a finite abelian group $A$ and a nondegenerate $\Q/\Z$-valued quadratic form $Q$ on it. The \textbf{signature} of $(A,Q)$ is the number $\sig(A,Q) \in \Z/8\Z$ defined through the Gauss sum of $A$ by 
\begin{align}\label{eq Milgram formula}
\mathbf{e}(\sig(A,Q)/8) = \frac{1}{\sqrt{|A|}} \sum_{\gamma \in A} \mathbf{e}(Q(\gamma)),
\end{align}
where $\mathbf{e}(x) = e^{2\pi i x}$. By Milgram's formula (\cite{MH}, appendix 4), this is also the signature mod $8$ of any even lattice which induces $(A,Q)$ as its discriminant form.

Let $\C[A]$ be the group algebra of $A$ with basis $\e_{\gamma}$, $\gamma \in A,$ and let $\Mp_{2}(\Z)$ be the integral metaplectic group, consisting of pairs $\left(\left(\begin{smallmatrix}a & b \\ c & d \end{smallmatrix}\right),\pm \sqrt{c\tau + d}\right)$ with $\left(\begin{smallmatrix}a & b \\ c & d \end{smallmatrix}\right) \in \SL_{2}(\Z)$. The \textbf{dual Weil representation} $\rho^{*}$ is a unitary representation of $\Mp_{2}(\Z)$ on $\C[A]$ which is defined on the generators $S = \left(\begin{psmallmatrix} 0 & -1 \\ 1 & 0 \end{psmallmatrix},\sqrt{\tau}\right)$ and $T =\left( \begin{psmallmatrix} 1 & 1 \\ 0 & 1 \end{psmallmatrix},1\right)$ by 
\[
\rho^*(T) \e_{\gamma} = \mathbf{e}(-Q(\gamma)) \e_{\gamma}, \qquad 
\rho^*(S) \e_{\gamma} = \frac{\mathbf{e}(\sig(A,Q)/8) }{\sqrt{|A|}} \sum_{\beta \in A} \mathbf{e}(\langle \beta, \gamma \rangle) \e_{\beta},
\]
where $\langle \beta, \gamma \rangle = Q(\beta + \gamma) - Q(\beta) - Q(\gamma)$ is the bilinear form associated to $Q$. We also write $\rho^{*}_{A}$ if we want to emphasize the dependence on $A$, or $\rho^{*}_{\Lambda}$ if $A$ is the discriminant form of an even lattice $\Lambda$.

A function $F: \H \to \mathbb{C}[A]$ is called a \textbf{weakly holomorphic modular form} of weight $k \in \frac{1}{2}\Z$ for $\rho^{*}$ if it is holomorphic on $\H$, if it satisfies 
\[
F\left( \frac{a\tau+b}{c\tau+d}\right) = (c\tau+d)^{k}\rho^{*}\left(\begin{pmatrix}a & b \\ c & d \end{pmatrix},\sqrt{c\tau+d}\right)F(\tau)
\]
for all $\left(\left(\begin{smallmatrix}a & b \\ c & d \end{smallmatrix}\right),\sqrt{c\tau+d}\right) \in \Mp_{2}(\Z)$, and if it is meromorphic at $\infty$, which means that it has a Fourier expansion of the form
\[
F(\tau) = \sum_{\gamma \in A}\sum_{\substack{n \in \Z-Q(\gamma) \\ n \gg -\infty}}c(n,\gamma)q^{n}\e_{\gamma},
\] 
with coefficients $c(n,\gamma) \in \C$ and $q = e^{2\pi i \tau}$. Following \cite{BB}, we will denote the space of all these functions by $A_{k}(\rho^{*})$ (instead of the more common $M_{k}^{!}(\rho^{*})$). We let $M_{k}(\rho^{*})$ and $S_{k}(\rho^{*})$ be the subspaces of holomorphic modular forms and cusp forms, respectively. If $A$ is the discriminant form of an even lattice $\Lambda$, then we also write $A_{k}(Q)$ or $A_{k}(\mathbf{S})$ for $A_{k}(\rho^{*})$, where $Q$ is the quadratic form on $\Lambda$ and $\mathbf{S}$ is the Gram matrix of $Q$ with respect to some basis of $\Lambda$.

The element $Z = (-I,i) = S^{2}$ acts by $\rho^{*}(Z)\e_{\gamma} = (-1)^{\sig(A,Q)/2}\e_{-\gamma}$ which implies that $A_{k}(\rho^{*}) = 0$ if $k + \sig(A,Q)/2$ is not integral, and that the components of any weakly holomorphic modular form $F = \sum_{\gamma \in A}F_{\gamma}\e_{\gamma} \in A_{k}(\rho^{*})$ satisfy
\[
F_{\gamma} = (-1)^{k+\sig(A,Q)/2}F_{-\gamma}
\]
for all $\gamma \in A$. Therefore we refer to $k$ as a \textbf{symmetric} or \textbf{antisymmetric weight} if $k+\sig(A,Q)/2$ is respectively even or odd.

\section{Vector-valued and scalar-valued modular forms}\label{sec isomorphisms}

In this section, we give isomorphisms between spaces $A_{k}(\rho^{*})$ of vector-valued modular forms for $\rho^{*}$ and scalar-valued modular forms for $\Gamma_{0}(p^{2})$ and $\Gamma_{0}(4p^{2})$ in the cases $|A| = p$ and $|A| = 2p$ with an odd prime $p$, for both symmetric and antisymmetric weights $k \in \frac{1}{2}\Z$.

\subsection{Finite quadratic modules of order $p$} Suppose that $|A| = p$ is an odd prime. Then $A \cong \Z/p\Z$ with $Q(\gamma) = \alpha\gamma^{2}/p$ for some $\alpha \in \Z$ with $p \nmid \alpha$. We put $\epsilon = \chi_{p}(\alpha) = \big( \frac{\alpha}{p}\big)$, and for odd $d \in \Z$ we let 
\[ 
\varepsilon_{d} = 
	\begin{cases}
		1, & p \equiv 1 \pmod 4, \\ 
		i, & p\equiv 3 \pmod 4.
	\end{cases}
\] 
The evaluation of the quadratic Gauss sum $\sum_{n(p)}\mathbf{e}(\alpha n^{2}/p) = \chi_{p}(\alpha)\varepsilon_{p}\sqrt{p}$ and Milgram's formula \eqref{eq Milgram formula} show that $\epsilon\varepsilon_{p} = \mathbf{e}(\sig(A,Q)/8)$. Thus the signature $\sig(A,Q) \in \Z/8\Z$ depends on $p$ and $\epsilon$ as shown in the following table:
\begin{align*}
	\begin{array}{c||c|c}
	p \pmod 4 & 1 & 3 \\
	\hline\hline
	\epsilon =+1 & 0 & 2\\
	\hline
	\epsilon =-1 & 4 & 6 \\
	\end{array}
\end{align*}
In particular, $\sig(A,Q)$ is even. Hence we can assume that $k$ is an integer since otherwise $A_{k}(\rho^{*}) = 0$.

Let $\chi$ be a Dirichlet character mod $p$ and let $A_{k}(p^{2},\chi\otimes \chi_{p})$ be the space of scalar-valued weakly holomorphic modular forms of weight $k$ for $\Gamma_{0}(p^{2})$ with character $\chi\otimes \chi_{p}$. We assume that 
\begin{align}\label{eq parity chi}
\chi(-1) = (-1)^{k+\sig(A,Q)/2}
\end{align}
since otherwise $A_{k}(p^{2},\chi\otimes\chi_{p}) = 0$. We define the subspace
\begin{align*}
A_{k}^{\epsilon}(p^{2},\chi\otimes\chi_{p}) 
= \left\{\sum_{\substack{n \in \Z \\ n\gg -\infty}}c(n)q^{n} \in A_{k}(p^{2},\chi\otimes\chi_{p})\ : \ \begin{aligned} & c(n) = 0 \text{ unless } n \in p(\Z-Q(\gamma)) \\ &\text{for some } \gamma \in (\Z/p\Z)^{*} \end{aligned}\right\}.
\end{align*}
The condition in the brackets can be restated by saying that $c(n) = 0$ unless $\chi_{p}(-n)=\epsilon$. Hence we also call it the \textbf{$\epsilon$-condition}. Note that, in contrast to the definition of the $\epsilon$-condition in \cite{BB}, we also require that $c(n) = 0$ if $p \mid n$. 

We define the \textbf{twisted component sum} of a vector-valued modular form $F(\tau) = \sum_{\gamma (p)}F_{\gamma}(\tau)\e_{\gamma} \in A_{k}(\rho^{*})$ by 
\[
\varphi_{\chi}\left( \sum_{\gamma (p)}F_{\gamma}(\tau)\e_{\gamma}\right) = \sum_{\gamma (p)^{*}}\chi(\gamma)F_{\gamma}(p\tau).
\]
The parity condition \eqref{eq parity chi} ensures that $\varphi_{\chi}(F)$ is not trivially identically zero. Again, also for symmetric weight $k$ and with $\chi$ the trivial character mod $p$, our twisted component sum differs from the component sum $\sum_{\gamma(p)}F_{\gamma}(p\tau)$ considered in \cite{BB} since we omit the zero component $F_{0}(p\tau)$ in the sum. For antisymmetric weight $k$ we have $F_{0} = 0$.

\begin{prop}\label{prop varphi injective}
	If $F \in A_{k}(\rho^{*})$, then $\varphi_{\chi}(F) \in A_{k}^{\epsilon}(p^{2},\chi\otimes\chi_{p})$. Furthermore, $\varphi_{\chi}$ is injective.
\end{prop}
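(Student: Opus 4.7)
The plan is to verify three things in turn: (i) the $\epsilon$-condition on the Fourier coefficients of $\varphi_\chi(F)$; (ii) its modular transformation under $\Gamma_0(p^2)$ with Nebentypus $\chi\otimes\chi_p$; and (iii) injectivity. Step (i) is essentially immediate, step (ii) is the main technical step, and step (iii) is a short combinatorial argument that uses the parity \eqref{eq parity chi}.

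For (i), a direct inspection suffices: for $\gamma \in (\Z/p\Z)^*$, the exponents of $F_\gamma$ lie in $\Z - Q(\gamma) = \Z - \alpha\gamma^2/p$, so $F_\gamma(p\tau)$ has $q$-exponents in $p\Z - \alpha\gamma^2$, and for any such $n$ one has $p \nmid n$ and $\chi_p(-n) = \chi_p(\alpha\gamma^2) = \chi_p(\alpha) = \epsilon$.

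For (ii), given $M = \begin{psmallmatrix} a & b \\ cp^2 & d \end{psmallmatrix} \in \Gamma_0(p^2)$, the pivotal identity is $p \cdot M\tau = M'(p\tau)$ with $M' = \begin{psmallmatrix} a & bp \\ cp & d \end{psmallmatrix} \in \Gamma_0(p)$. I then apply the transformation law of $F$ under $M'$, which reduces everything to evaluating $\rho^*(M')$. Using the generators $S$ and $T$ (or Scheithauer's general formula), one shows that, for the restricted shape of $M'$ arising here, the phase $\mathbf{e}(-bp\cdot Q(\gamma)) = \mathbf{e}(-b\alpha\gamma^2)$ is trivial because $b\alpha\gamma^2 \in \Z$, so
\[
\rho^*(M')\e_\gamma = \chi_p(d)\,\e_{d\gamma}.
\]
Expanding $\varphi_\chi(F)(M\tau) = \sum_\gamma \chi(\gamma) F_\gamma(pM\tau)$ and reindexing by $\beta = a\gamma$ (so that $\gamma \equiv d\beta \pmod p$, since $ad \equiv 1 \pmod p$) produces the overall factor $\chi(d)\chi_p(d) = (\chi\otimes\chi_p)(d)$, which is the claimed Nebentypus.

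For (iii), suppose $\varphi_\chi(F) \equiv 0$. For each $n$ with $p \nmid n$ and $\chi_p(-n) = \epsilon$, exactly two components contribute to the $q^n$-coefficient, namely $\gamma = \pm\gamma_0$ where $\alpha\gamma_0^2 \equiv -n \pmod p$. Combining $F_{-\gamma} = (-1)^{k+\sig(A,Q)/2} F_\gamma$ with the parity hypothesis \eqref{eq parity chi} gives $\chi(-\gamma_0) c(n/p,-\gamma_0) = \chi(\gamma_0) c(n/p,\gamma_0)$, so the two terms reinforce rather than cancel and the coefficient equals $2\chi(\gamma_0) c(n/p,\gamma_0)$. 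Hence $F_\gamma \equiv 0$ for every $\gamma \in (\Z/p\Z)^*$. Finally, applying the $S$-transformation of $F$ and comparing $\e_\beta$-components for any $\beta \ne 0$ of $F(-1/\tau) = \tau^k \rho^*(S) F(\tau)$ reduces to $0 = \tau^k \cdot \frac{\mathbf{e}(\sig(A,Q)/8)}{\sqrt{p}} F_0(\tau)$, forcing $F_0 \equiv 0$. The main obstacle is keeping track of the metaplectic cocycle in the formula for $\rho^*(M')$; however, the divisibility of the $b$- and $c$-entries of $M'$ by $p$ causes the phases and powers of $\varepsilon_d$ that would otherwise appear to simplify to $1$.
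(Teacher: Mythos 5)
Your proposal is correct and follows essentially the same route as the paper: the same identity $p\,M\tau = M'(p\tau)$ with $M' \in \Gamma_{0}(p)$ whose upper-right entry is divisible by $p$ (so the phase in the explicit formula for $\rho^{*}(M')$ is trivial and $\rho^{*}(M')\e_{\gamma} = \chi_{p}(d)\e_{d\gamma}$, which the paper takes from Borcherds' Theorem 5.2), the same reindexing producing the Nebentypus $\chi\otimes\chi_{p}$, and the same two-step injectivity argument (disjoint supports together with the parity of $\chi$ force $F_{\gamma}=0$ for $\gamma\neq 0$, then the $S$-transformation forces $F_{0}=0$; your comparison of the $\e_{\beta}$-component for $\beta\neq 0$ is a harmless variant of the paper's device of applying $S$ twice). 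The only point you leave implicit is that $\varphi_{\chi}(F)$ is meromorphic at all cusps of $\Gamma_{0}(p^{2})$, which, as the paper notes, follows because each $F_{\gamma}|_{k}M$ is a linear combination of components of $F$.
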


\begin{proof}
	Let $M = \left(\begin{smallmatrix}a & b \\ c & d \end{smallmatrix} \right) \in \Gamma_{0}(p^{2})$ and $N = \left(\begin{smallmatrix}a & bp \\ c/p & d \end{smallmatrix} \right)$. We write
	\begin{align*}
	\varphi_{\chi}(F)(M\tau) = \sum_{\gamma (p)^{*}}\chi(\gamma)F_{\gamma}(p\cdot M\tau) = \sum_{\gamma (p)^{*}}\chi(\gamma)F_{\gamma}(N(p\tau)).
	\end{align*}
	By \cite{BReflection}, Theorem 5.2, we have $\rho^{*}(N)\e_{\gamma} = \chi_{p}(d)\e_{d\gamma}$, hence
	\[
	F_{\gamma}(N(p\tau)) =\chi_{p}(d) (c\tau + d)^{k}F_{d^{-1}\gamma}(p\tau),
	\]
	where $d^{-1}$ denotes an inverse of $d$ mod $p$. Thus we find
	\begin{align*}
	\varphi_{\chi}(F)(M\tau) &= \chi_{p}(d)(c\tau + d)^{k}\sum_{\gamma (p)^{*}}\chi(\gamma)F_{d^{-1}\gamma}(p\tau) = \chi\chi_{p}(d)(c\tau+d)^{k}\varphi_{\chi}(F)(\tau).
	\end{align*}
	It is clear that $\varphi_{\chi}(F)$ is holomorphic on $\H$ and meromorphic at the cusps (since $F_{\gamma}|_{k}M$ is a linear combination of components of $F$), and that it satisfies the $\epsilon$-condition, so $\varphi_{\chi}(F) \in A_{k}^{\epsilon}(p^{2},\chi\otimes\chi_{p})$.

	Now suppose that $\varphi_{\chi}(F) = 0$ for some $F \in A_{k}(\rho^{*})$. Since the components $F_{\gamma},F_{\beta}$ for $\beta \neq \pm \gamma$ are supported on disjoint index sets, $\varphi_{\chi}(F) = 0$ implies that $F_{\gamma} = 0$ for all $\gamma \neq 0$. But then the zero component of $F$ satisfies 
	\[
	F_{0}|_{k}S = \frac{\mathbf{e}(\sig(A,Q)/8)}{\sqrt{p}}\sum_{\gamma(p)}F_{\gamma} = \frac{\mathbf{e}(\sig(A,Q)/8)}{\sqrt{p}}F_{0},
	\]
	and applying $|_{k}S$ a second time, we find $(-1)^{k}F_{0} = \frac{\mathbf{e}(\sig(A,Q)/4)}{p}F_{0}$, hence $F_{0} = 0$. We have shown that $F = 0$, so $\varphi_{\chi}$ is injective.
\end{proof}

The map $\varphi_{\chi}$ is in general not surjective. Its image can be described in terms of the behaviour of certain twists of $G$ under the Atkin-Lehner involution, which we explain now.

We can split $G(\tau) = \sum_{n \gg -\infty}c(n)q^{n} \in A_{k}^{\epsilon}(p^{2},\chi\otimes\chi_{p})$ into components
\[
G(\tau) = \sum_{\gamma(p)^{*}}G_{\gamma}(\tau), \qquad G_{\gamma}(\tau) = \frac{1}{2}\sum_{\substack{n \in p(\Z-Q(\gamma)) \\ n\gg -\infty}}c(n)q^{n}.
\]
We define the \textbf{component-wise twist} of $G$ by a Dirichlet character $\psi$ mod $p$ by
\[
G_{\psi}(\tau) = \sum_{\gamma (p)^{*}}\psi(\gamma)G_{\gamma}(\tau).
\]
Note that the component-wise twist differs from the usual twist $\sum_{n \gg-\infty}\psi(n)c(n)q^{n}$ of a modular form.

\begin{lem}\label{lem twisting}
	Let $G \in A_{k}^{\epsilon}(p^{2},\chi\otimes \chi_{p})$ and let $\psi$ be a Dirichlet character mod $p$. Then $G_{\psi} \in A_{k}^{\epsilon}(p^{2},\psi\otimes\chi\otimes \chi_{p})$.
\end{lem}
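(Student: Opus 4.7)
The plan is to exhibit $G_{\psi}$ as, up to a nonzero scalar, the classical Dirichlet twist of $G$ by a character $\eta$ mod $p$ satisfying $\eta^{2} = \psi$, and then to invoke the standard modularity result for such twists. First observe that $Q(\gamma) = Q(-\gamma)$ forces $G_{\gamma} = G_{-\gamma}$, so
\[
G_{\psi} \;=\; \tfrac{1}{2}\sum_{\gamma(p)^{*}}\bigl(\psi(\gamma) + \psi(-\gamma)\bigr)\,G_{\gamma}.
\]
If $\psi(-1) = -1$ this collapses to $G_{\psi} \equiv 0$, which lies in every space; so one may assume $\psi$ is even. Then $\psi$ is a square in the character group of $(\Z/p\Z)^{*}$ and admits a square root $\eta$ (two choices, differing by $\chi_{p}$; either will do).

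Next, I would use the $\epsilon$-condition to identify $G_{\psi}$ with a classical Dirichlet twist of $G$. For any $n$ with $c(n) \neq 0$, the $\epsilon$-condition forces $\chi_{p}(-n/\alpha) = 1$, so $\gamma_{n}^{2} \equiv -n/\alpha \pmod p$ has a solution $\gamma_{n} \in (\Z/p\Z)^{*}$, unique up to sign. Since $\psi(-1) = 1$ the value $\psi(\gamma_{n})$ is well-defined, and
\[
\psi(\gamma_{n}) \;=\; \eta(\gamma_{n}^{2}) \;=\; \eta(-n/\alpha) \;=\; \eta(-1/\alpha)\,\eta(n).
\]
Substituting into the Fourier expansion gives
\[
G_{\psi}(\tau) \;=\; \sum_{n}\psi(\gamma_{n})\,c(n)\,q^{n} \;=\; \eta(-1/\alpha)\,(G\otimes\eta)(\tau),
\]
where $(G\otimes\eta)(\tau) = \sum_{n}\eta(n)c(n)q^{n}$ is the usual Dirichlet twist.

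Finally, the classical modularity theorem for twists (proved by writing $G \otimes \eta = \tfrac{1}{g(\bar\eta)}\sum_{a(p)}\bar\eta(a)\,G(\tau + a/p)$ for $\eta$ primitive, and then checking modularity directly by factoring each $T^{a/p}M$ with $M = \left(\begin{smallmatrix}A & B \\ C & D\end{smallmatrix}\right) \in \Gamma_{0}(p^{2})$ as $M'\,T^{a'/p}$ with $M' \in \Gamma_{0}(p^{2})$ and $a' \equiv aD^{2} \pmod p$) gives $G\otimes\eta \in A_{k}\bigl(\Gamma_{0}(p^{2}),\, (\chi\otimes\chi_{p})\eta^{2}\bigr) = A_{k}\bigl(\Gamma_{0}(p^{2}),\, \psi\otimes\chi\otimes\chi_{p}\bigr)$. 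Since twisting only rescales existing Fourier coefficients, the $\epsilon$-condition is inherited automatically, so $G_{\psi} \in A_{k}^{\epsilon}(p^{2}, \psi\otimes\chi\otimes\chi_{p})$. The main technical point is the identification $G_{\psi} = \eta(-1/\alpha)(G\otimes\eta)$, which hinges on the $\epsilon$-condition to ensure that $\eta$ is only evaluated on nonzero squares mod $p$, where the two possible square roots of $\psi$ agree.
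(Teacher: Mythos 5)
Your proof is correct, but it takes a genuinely different route from the paper's. You exploit two structural facts: first, $Q(\gamma)=Q(-\gamma)$ gives $G_{\gamma}=G_{-\gamma}$, so $G_{\psi}\equiv 0$ for odd $\psi$ and only even $\psi$ matters; second, the $\epsilon$-condition guarantees that every exponent $n$ in the support is coprime to $p$ and determines $\gamma$ up to sign via $\gamma^{2}\equiv -n/\alpha \pmod p$, which lets you rewrite the component-wise twist as an ordinary Dirichlet twist, $G_{\psi}=\eta(-1/\alpha)\,(G\otimes\eta)$ with $\eta^{2}=\psi$, and then quote (or re-derive, as you sketch) the classical twisting lemma, which here gives level $\operatorname{lcm}(p^{2},p\cdot p,p^{2})=p^{2}$ and character $(\chi\otimes\chi_{p})\eta^{2}=\psi\otimes\chi\otimes\chi_{p}$; the $\epsilon$-condition passes to $G_{\psi}$ since twisting only rescales coefficients. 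The paper instead writes $2G_{\gamma}=\frac{1}{p}\sum_{j(p)}G|_{k}\left(\begin{smallmatrix}1 & j/p\\ 0 & 1\end{smallmatrix}\right)\mathbf{e}(jQ(\gamma))$ and conjugates the translations $\left(\begin{smallmatrix}1 & j/p\\ 0 & 1\end{smallmatrix}\right)$ through $\Gamma_{0}(p^{2})$ directly; that computation treats all $\psi$ uniformly, needs neither the $\epsilon$-condition nor a square root of $\psi$, and is self-contained (the same conjugation trick is of course hiding inside the classical twisting lemma you invoke). What your route buys is the conceptually useful observation that, on $A_{k}^{\epsilon}$, component-wise twisting by an even $\psi$ is an honest Dirichlet twist up to the constant $\eta(-1/\alpha)$, while odd $\psi$ annihilates the form. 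The only points needing a word of care are legitimate and you essentially cover them: the Gauss-sum representation of $G\otimes\eta$ requires $\eta$ primitive, so for trivial $\psi$ either note the statement is the hypothesis itself or choose $\eta=\chi_{p}$; and the independence of the choice of square root (the two differ by $\chi_{p}$) follows since $\eta$ is only ever evaluated at $-n/\alpha$, which is a nonzero square mod $p$ on the support.
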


\begin{proof}
	We can write
	\[
	G_{\psi} = \frac{1}{2p}\sum_{\gamma (p)^{*}}\psi(\gamma)\sum_{j(p)}G\bigg|_{k}\begin{pmatrix}1 &j/p \\ 0 &1 \end{pmatrix}\mathbf{e}(jQ(\gamma)).
	\]
	Let $M = \left(\begin{smallmatrix}a & b \\ c & d \end{smallmatrix}\right) \in \Gamma_{0}(p^{2})$. We compute
	\[
	\begin{pmatrix}1 &j/p \\ 0 &1 \end{pmatrix}\begin{pmatrix}a & b \\ c & d \end{pmatrix}\begin{pmatrix}1 &-d^{2}j/p \\ 0 &1 \end{pmatrix} = \begin{pmatrix}a + cj/p & b+(1-ad)dj/p-d^{2}cj^{2}/p^{2} \\ c & d-d^{2}cj/p \end{pmatrix} \in \Gamma_{0}(p^{2}).
	\]
	Note that the $d$-entry of this matrix equals $d$ mod $p$. Hence we obtain
	\begin{align*}
	G_{\psi}|_{k}M = \chi\chi_{p}(d)\frac{1}{2p}\sum_{\gamma \in (p)^{*}}\psi(\gamma)\sum_{j(p)}G\bigg|_{k}\begin{pmatrix}1 &d^{2}j/p \\ 0 &1 \end{pmatrix}\mathbf{e}(jQ(\gamma)).
	\end{align*}
	Replacing $d^{2}j$ by $j$ and then $\gamma$ by $d\gamma$ gives a factor $\psi(d)$ and completes the proof.
\end{proof}

We let $W_{p^{2}} = \left(\begin{smallmatrix}0 & - 1 \\ p^{2} & 0 \end{smallmatrix} \right)$ be the  Atkin-Lehner (or Fricke) involution. It maps $A_{k}(p^{2},\chi\otimes\chi_{p})$ to $A_{k}(p^{2},\overline{\chi}\otimes \chi_{p})$, but it does in general not respect the $\epsilon$-condition. We say that $G \in A_{k}^{\epsilon}(p^{2},\chi\otimes\chi_{p})$ satisfies the \textbf{Atkin-Lehner condition} if the twists of $G$ by all Dirichlet characters $\psi$ mod $p$ with $\psi \neq \overline{\chi}$ satisfy
\begin{align}\label{AL}
G_{\psi}|_{k}W_{p^{2}} = \overline{\psi\chi(2\alpha)}\frac{g(\psi\chi)}{\sqrt{p}}\mathbf{e}(\sig(A,Q)/8)G_{\overline{\psi}\overline{\chi}^{2}},
\end{align}
where $g(\psi\chi) = \sum_{n(p)^{*}}\psi\chi(n)\mathbf{e}(n/p)$ is a Gauss sum. We let $A_{k}^{\epsilon,\AL}(p^{2},\chi\otimes\chi_{p})$ be the subspace of $A_{k}^{\epsilon}(p^{2},\chi\otimes \chi_{p})$ consisting of all forms satisfying the Atkin-Lehner condition.

\begin{prop}\label{prop isomorphism p}
	The linear map
	\[
	\varphi_{\chi}: A_{k}(\rho^{*}) \to A_{k}^{\epsilon,\AL}(p^{2},\chi\otimes \chi_{p}), \qquad F(\tau) = \sum_{\gamma(p)}F_{\gamma}(\tau)\e_{\gamma} \mapsto \sum_{\gamma(p)^{*}}\chi(\gamma)F_{\gamma}(p\tau),
	\]
	is an isomorphism. The inverse map is given by
	\[
	\varphi_{\chi}^{-1}: G(\tau) = \sum_{\gamma(p)^{*}}G_{\gamma}(\tau) \mapsto \sum_{\gamma(p)^{*}}\overline{\chi(\gamma)}G_{\gamma}(\tau/p)\e_{\gamma} + G_{0}(\tau/p)\e_{0},
	\]
	where $G_{0}$ is defined by
	\[
	G_{0}(\tau) = \frac{\sqrt{p}}{p-1}\mathbf{e}(-\sig(A,Q)/8)\left(G_{\overline{\chi}}|_{k}W_{p^{2}}\right)(\tau)+\frac{1}{p-1}G_{\overline{\chi}}(\tau).
	\]
\end{prop}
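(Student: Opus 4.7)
Proposition~\ref{prop varphi injective} already yields injectivity of $\varphi_\chi$ and that its image lies in $A_k^\epsilon(p^2,\chi\otimes\chi_p)$. What remains is (a) showing the image satisfies the Atkin--Lehner condition, (b) verifying that the stated inverse formula genuinely produces an element of $A_k(\rho^*)$ when applied to $G\in A_k^{\epsilon,\AL}(p^2,\chi\otimes\chi_p)$, and (c) checking that the two maps are mutually inverse.

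For (a), I take $F\in A_k(\rho^*)$ and observe that the components of $G=\varphi_\chi(F)$ are $G_\gamma(\tau)=\chi(\gamma)F_\gamma(p\tau)$, which is consistent with the symmetry $G_\gamma=G_{-\gamma}$ thanks to the parity relation on $\chi$ and on $F$. Hence the component-wise twist is $G_\psi(\tau)=\sum_{\gamma(p)^*}(\psi\chi)(\gamma)F_\gamma(p\tau)$. Applying $|_k W_{p^2}$ reduces to evaluating $F_\gamma(-1/(p\tau))$ via the $S$-transformation of $\rho^*$, which produces $(p\tau)^k\tfrac{\mathbf{e}(\sig(A,Q)/8)}{\sqrt{p}}\sum_\beta\mathbf{e}(2\alpha\gamma\beta/p)F_\beta(p\tau)$. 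Swapping summations, the inner sum $\sum_{\gamma(p)^*}(\psi\chi)(\gamma)\mathbf{e}(2\alpha\beta\gamma/p)$ is evaluated by the substitution $\gamma\mapsto(2\alpha\beta)^{-1}\gamma$ as $\overline{(\psi\chi)(2\alpha\beta)}\,g(\psi\chi)$ for $\beta\neq 0$, and vanishes for $\beta\equiv 0$ since $\psi\chi$ is nontrivial under the assumption $\psi\neq\overline{\chi}$. Collecting the remaining $F_\beta(p\tau)=\overline{\chi(\beta)}G_\beta(\tau)$ into $G_{\overline{\psi}\,\overline{\chi}^2}$ yields exactly the AL identity.

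For (b), the $T$-transformation is immediate from the Fourier supports once one checks that the prescription for $G_0$ has its Fourier expansion on $p\Z$. The $S$-transformation is the core step. I would expand $G_\gamma=\frac{1}{p-1}\sum_\psi\overline{\psi}(\gamma)G_\psi$ by Fourier inversion over $(\Z/p\Z)^*$, isolate the $\psi=\overline{\chi}$ piece which is not governed by the AL condition, and substitute the AL formula for the remaining $\psi$. Comparing the result for $\tilde F(-1/\tau)$ with $\tau^k\rho^*(S)\tilde F(\tau)$ componentwise on each $\e_\beta$, one finds that the uncontrolled $\psi=\overline{\chi}$ contribution must be absorbed by the $G_0(\tau/p)\e_0$ term; matching coefficients forces precisely the stated combination $G_0=\tfrac{\sqrt p}{p-1}\mathbf{e}(-\sig(A,Q)/8)(G_{\overline{\chi}}|_k W_{p^2})+\tfrac{1}{p-1}G_{\overline{\chi}}$, including the origin of the two prefactors.

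Granted (a) and (b), part (c) is then straightforward: for $F\in A_k(\rho^*)$ the $\gamma\neq 0$ components of $\varphi_\chi^{-1}(\varphi_\chi(F))$ collapse as $\overline{\chi(\gamma)}\chi(\gamma)F_\gamma=F_\gamma$, and the reconstructed $\e_0$-component agrees with $F_0$ because, applied to $G=\varphi_\chi(F)$, the defining formula for $G_0$ is exactly the identity satisfied by $F_0$ under the $S$-transformation of $F$. The reverse composition $\varphi_\chi\circ\varphi_\chi^{-1}=\mathrm{id}$ then follows by direct unwinding of the decomposition $G=\sum_{\gamma(p)^*}G_\gamma$. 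The main obstacle is plainly step (b): keeping track of the interplay between the Gauss-sum prefactors coming from the AL identity and those produced by $\rho^*(S)$, and in particular confirming the precise numerical coefficients in the definition of $G_0$.
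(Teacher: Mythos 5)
Your overall strategy coincides with the paper's: part (a) is literally the computation given there (unfold $G_{\psi}|_{k}W_{p^{2}}$ through the $S$-transformation of $\rho^{*}$, evaluate the Gauss sum, and observe that the $\beta=0$ term survives only for $\psi=\overline{\chi}$), your character-inversion step in (b) is the paper's ``reverse the computation and use orthogonality over $\psi$'', and (c) is the same routine check of mutual inversion.

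There is, however, one genuine gap inside (b). The mechanism you describe --- expand $G_{\gamma}=\tfrac{1}{p-1}\sum_{\psi}\overline{\psi}(\gamma)G_{\psi}$, feed in the Atkin--Lehner identities for $\psi\neq\overline{\chi}$ and the definition of $G_{0}$ for $\psi=\overline{\chi}$ --- only produces the $S$-transformation of the components $\e_{\gamma}$ with $\gamma\in(\Z/p\Z)^{*}$: the twists $G_{\psi}$ and the characters see nothing of the zero component, so ``comparing componentwise on each $\e_{\beta}$'' does not by itself yield the $\e_{0}$-equation $F_{0}|_{k}S=\tfrac{\mathbf{e}(\sig(A,Q)/8)}{\sqrt{p}}\sum_{\beta}F_{\beta}$. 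This is precisely the point the paper isolates with ``a short computation shows that this equation also implies $F_{0}|_{k}S=\dots$'': one must either derive the $\e_{0}$-relation from the $\gamma\neq 0$ relations (apply $|_{k}S$ once more, use $|_{k}S^{2}=(-1)^{k}$ and $F_{-\gamma}=(-1)^{k+\sig(A,Q)/2}F_{\gamma}$, and solve for $F_{0}|_{k}S$), or verify it directly from the definition of $G_{0}$ using $G_{\overline{\chi}}|_{k}W_{p^{2}}|_{k}W_{p^{2}}=(-1)^{k}G_{\overline{\chi}}$ together with the parity $(-1)^{k}=(-1)^{\sig(A,Q)/2}$ in the symmetric case (in the antisymmetric case $G_{\overline{\chi}}=0$, hence $G_{0}=0$). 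Without this step your ``absorption'' argument shows at most that the stated $G_{0}$ is the only possible candidate for the zero component, not that $\varphi_{\chi}^{-1}(G)$ actually satisfies the full $S$-equivariance, which is what surjectivity requires. Relatedly, you correctly flag, but do not establish, that $G_{0}$ is supported on exponents in $p\Z$ (equivalently, the $T$-invariance of the $\e_{0}$-component); the paper is equally terse on this point, but note that for $\gamma\neq 0$ the $T$-law is immediate from Fourier supports, so $\gamma=0$ is the only case where anything needs to be said.
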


\begin{proof}
	We first show that $G = \varphi_{\chi}(F)$ for $F = \sum_{\gamma(p)}F_{\gamma}\e_{\gamma} \in A_{k}(\rho^{*})$ satisfies the Atkin-Lehner condition. Let $\psi$ be a Dirichlet character mod $p$ and let $\delta_{\psi,\overline{\chi}} = 1$ if $\psi = \overline{\chi}$ and $\delta_{\psi,\overline{\chi}} = 0$ otherwise. We compute
	\begin{align*}
	\left(G_{\psi}|_{k}W_{p^{2}}\right)(\tau) &= p^{k}(p^{2}\tau )^{-k}G_{\psi}\left(-\frac{1}{p^{2}\tau}\right) \\
	&= \sum_{\gamma(p)^{*}}\psi(\gamma)\chi(\gamma)(p\tau)^{-k}F_{\gamma}\left(-\frac{1}{p\tau} \right) \\
	&= \sum_{\gamma(p)^{*}}\psi(\gamma)\chi(\gamma)\frac{\mathbf{e}(\sig(A,Q)/8)}{\sqrt{p}}\sum_{\beta(p)}\mathbf{e}((\beta,\gamma))F_{\beta}(p\tau) \\
	&= \frac{\mathbf{e}(\sig(A,Q)/8)}{\sqrt{p}}\left(\sum_{\beta(p)^{*}}\left(\sum_{\gamma(p)^{*}}\psi(\gamma)\chi(\gamma)\mathbf{e}(2\alpha \beta \gamma/p) \right) F_{\beta}(p\tau)+ \delta_{\psi,\overline{\chi}}(p-1)F_{0}(p\tau)\right)\\
	&= \frac{\mathbf{e}(\sig(A,Q)/8)}{\sqrt{p}}\left(\sum_{\gamma (p)^{*}}\psi(\gamma)\chi(\gamma)\mathbf{e}(2\alpha \gamma/p)\sum_{\beta(p)^{*}}\overline{\psi(\beta)}\overline{\chi(\beta)} F_{\beta}(p\tau) + \delta_{\psi,\overline{\chi}}(p-1)F_{0}(p\tau)\right)\\
	&= \frac{\mathbf{e}(\sig(A,Q)/8)}{\sqrt{p}}\left(\overline{\psi\chi(2\alpha)}g(\psi\chi)G_{\overline{\psi}\overline{\chi}^{2}}(\tau) + \delta_{\psi,\overline{\chi}}(p-1)F_{0}(p\tau)\right).
	\end{align*}
	This shows that $G$ satisfies the Atkin-Lehner condition, i.e., $G \in A_{k}^{\epsilon,\AL}(p^{2},\chi\otimes \chi_{p})$.
	
	Conversely, if $G \in A_{k}^{\epsilon,\AL}(p^{2},\chi\otimes\chi_{p})$ and if $G_{0}$ is defined as in the proposition, then we can reverse the above computation (with $F_{0}(p\tau) = G_{0}(\tau)$ and $F_{\gamma}(p\tau) = \overline{\chi(\gamma)}G_{\gamma}(\tau)$ for $\gamma \neq 0$) to see that the second and third line agree for all Dirichlet characters $\psi$ mod $p$. By character orthogonality, we obtain that 
	\[
	F_{\gamma}|_{k}S = \frac{\mathbf{e}(\sig(A,Q)/8)}{\sqrt{p}}\sum_{\beta(p)}\mathbf{e}((\beta,\gamma))F_{\beta}
	\]
	for all $\gamma \neq 0$. A short computation shows that this equation also implies
	\[
	F_{0}|_{k}S = \frac{\mathbf{e}(\sig(A,Q)/8)}{\sqrt{p}}\sum_{\beta(p)}F_{\beta}.
	\]
	Furthermore, it is easy to check that $F_{\gamma}$ transforms correctly under $T$. We find that $\varphi_{\chi}^{-1}(G) \in A_{k}(\rho^{*})$. Since $\varphi_{\chi}\circ \varphi_{\chi}^{-1} = \mathrm{id}$ and $\varphi_{\chi}$ is injective, $\varphi_{\chi}$ is an isomorphism.
\end{proof}

\subsection{Finite quadratic modules of order $2p$} Suppose that $|A| = 2p$ with an odd prime $p$. Then $A \cong \Z/2p\Z$ with the quadratic form
\[
Q(p\gamma_{1} + \gamma_{2}) = \delta \gamma_{1}^{2}/4+\alpha \gamma_{2}^{2}/p
\]
for $\gamma_{1} \in \Z/2\Z$ and $\gamma_{2} \in \Z/p\Z$, where $\delta \in \{\pm 1\}$ and $\alpha \in \Z$ with $p \nmid \alpha$. Set $\epsilon = \chi_{p}(\alpha)$. Using the quadratic Gauss sum and Milgram's formula we obtain $(1+\delta i)\epsilon\varepsilon_{p} = \sqrt{2}\mathbf{e}(\sig(A,Q)/8)$, so the signature $\sig(A,Q) \in\Z/8\Z$ is given in terms of $p, \epsilon$ and $\delta$ as follows:
\begin{align*}
	\begin{array}{c||c|c}
	p \pmod 4 & 1 & 3 \\
	\hline\hline
	\epsilon = +1 & \delta & \delta+2\\
	\hline
	\epsilon = -1 & \delta+4 & \delta+6
	\end{array}
\end{align*}
Now $\sig(A,Q)$ is odd. Hence we can assume that $k$ is half-integral since otherwise $A_{k}(\rho^{*}) = 0$.

Let us briefly recall the definition of modular forms of half-integral weight. The theta multiplier is given by
\[
\nu_{\vartheta}(M)= \bigg(\frac{c}{d}\bigg)\varepsilon_{d}^{-1}, \qquad \varepsilon_{d} = \bigg(\frac{2}{d}\bigg)\mathbf{e}((1-d)/8)= \begin{cases} 1, & d \equiv 1(4), \\
i,&  d \equiv 3 (4),  \end{cases}
\]
for $M = \left(\begin{smallmatrix}a & b \\ c & d \end{smallmatrix}\right) \in \Gamma_{0}(4)$. A function $G:\H \to \C$ is called a weakly holomorphic modular form of weight $k \in \frac{1}{2}+\Z$ for $\Gamma_{0}(N)$ with $4 \mid N$ and character $\chi$ mod $N$ if it is holomorphic on $\H$ and meromorphic at the cusps, and if it transforms as
\[
G(M\tau) = \chi(M)\nu_{\vartheta}(M)^{\pm 1}(c\tau+d)^{k}G(\tau)
\] 
for $M = \left(\begin{smallmatrix}a & b \\ c & d \end{smallmatrix}\right) \in \Gamma_{0}(N)$, where the sign in $\nu_{\vartheta}(M)^{\pm 1}$ is chosen such that $\nu_{\vartheta}(-1)^{\pm 1}\chi(-1)i^{2k} = 1$. We denote the space of all these functions by $A_{k}(N,\chi)$. 

We let $\chi$ be a character mod $p$ and we again assume that 
\[
\chi(-1) = (-1)^{k+\sig(A,Q)/2}.
\]
We consider the space
\begin{align*}
A_{k}^{\epsilon}(16p^{2},\chi)= \left\{\sum_{n\gg -\infty}c(n)q^{n} \in A_{k}(16p^{2},\chi) \ : \  \begin{aligned} & c(n) = 0 \text{ unless } n \in 4p(\Z-Q(\gamma)) \\ & \text{for some } \gamma \in (\Z/2p\Z) \setminus \{0,p\} \end{aligned}\right\}
\end{align*}
and we let $A_{k}^{\epsilon}(4p^{2},\chi) = A_{k}^{\epsilon}(16p^{2},\chi) \cap A_{k}(4p^{2},\chi)$.

We define the \textbf{twisted component sum} of $F(\tau) = \sum_{\gamma(2p)}F_{\gamma}(\tau)\e_{\gamma} \in A_{k}(\rho^{*})$ by
\[
\varphi_{\chi}\left(\sum_{\gamma(2p)}F_{\gamma}(\tau)\e_{\gamma} \right) = \sum_{\gamma(2p)}\chi(\gamma)F_{\gamma}(4p\tau).
\]
Note that, since $\chi$ is a character mod $p$, we discard the components $F_{0}$ and $F_{p}$ in the twisted component sum. If $k$ is an antisymmetric weight, then $F_{0} = F_{p} = 0$ is automatic. This map was already suggested in \cite{EZ}, p. 70, in the context of Jacobi forms.

\begin{prop}
	If $F \in A_{k}(\rho^{*})$, then $\varphi_{\chi}(F) \in A_{k}^{\epsilon}(4p^{2}, \chi)$. Furthermore, $\varphi_{\chi}$ is injective.
\end{prop}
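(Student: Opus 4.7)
The plan is to follow the template of Proposition~\ref{prop varphi injective}, adjusting for the half-integral-weight setting and the two ``special'' components $F_{0}, F_{p}$.

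For modularity, given $M = \left(\begin{smallmatrix} a & b \\ c & d \end{smallmatrix}\right) \in \Gamma_{0}(4p^{2})$, I would introduce the auxiliary matrix $N = \left(\begin{smallmatrix} a & 4pb \\ c/(4p) & d \end{smallmatrix}\right) \in \SL_{2}(\Z)$, so that $4p\cdot M\tau = N(4p\tau)$, and compute $\rho^{*}(N)\e_{\gamma}$ using the explicit description of the dual Weil representation on the level subgroup (via \cite{BReflection}, Theorem 5.2), lifted to the metaplectic cover via the theta multiplier. The outcome should be of the shape $\rho^{*}(N)\e_{\gamma} = \chi_{p}(d)\,\nu_{\vartheta}(M)^{\pm 1}\,\e_{d\gamma}$, the sign being pinned down by the parity hypothesis $\chi(-1) = (-1)^{k+\sig(A,Q)/2}$. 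Substituting into $F_{\gamma}(N(4p\tau))$ and reindexing $\gamma \mapsto d\gamma$ in the twisted sum extracts a factor of $\chi(d)$, producing the required transformation. The Fourier-support condition $c(n)=0$ for $n \notin 4p(\Z-Q(\gamma))$ with $\gamma \notin \{0,p\}$ is immediate from the definition of $\varphi_{\chi}$, since the terms $\gamma = 0$ and $\gamma = p$ are killed because $\chi$ is a character mod $p$; holomorphy on $\H$ and meromorphy at the cusps follow from the standard observation that any slash-translate $F_{\gamma}|_{k}M$ is a $\C$-linear combination of the components of $F$.

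For injectivity, suppose $\varphi_{\chi}(F) \equiv 0$. The Fourier supports of $F_{\beta}(4p\tau)$ for different pairs $\{\beta,-\beta\}$ in $(\Z/2p\Z) \setminus \{0,p\}$ are disjoint, and the pair $\{\gamma,-\gamma\}$ contributes $2\chi(\gamma)F_{\gamma}(4p\tau)$ to the twisted sum by virtue of $F_{-\gamma} = (-1)^{k+\sig(A,Q)/2}F_{\gamma}$ together with the parity of $\chi$; hence $F_{\gamma} = 0$ for every $\gamma \notin \{0,p\}$. It remains to eliminate $F_{0}$ and $F_{p}$. In the antisymmetric case ($k+\sig(A,Q)/2$ odd) this is automatic since $-0 \equiv 0$ and $-p \equiv p \pmod{2p}$. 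In the symmetric case, applying $|_{k}S$ to the two surviving components and using the explicit formulas for $\rho^{*}(S)\e_{0}$ and $\rho^{*}(S)\e_{p}$, together with $\mathbf{e}(\langle \beta, p\rangle) = (-1)^{\gamma_{1}(\beta)}$ where $\beta = p\gamma_{1}+\gamma_{2}$, gives a small linear system expressing $F_{0}|_{k}S, F_{p}|_{k}S$ in terms of $F_{0}, F_{p}$. Iterating once more and comparing with the action of $Z=S^{2}$ forces $F_{0} = F_{p} = 0$, using crucially that $\sig(A,Q)$ is odd so that the resulting scalar constants (involving $\mathbf{e}(\sig(A,Q)/4)/p$) cannot agree with the required real value $(-1)^{k+\sig(A,Q)/2}$ unless the components vanish.

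The main obstacle I anticipate is the modularity step. Borcherds' formula describes $\rho^{*}(N)$ naturally on $\Gamma_{0}(4p)$, which corresponds to $M \in \Gamma_{0}(16p^{2})$ rather than $\Gamma_{0}(4p^{2})$. I would therefore first establish $\varphi_{\chi}(F) \in A_{k}^{\epsilon}(16p^{2},\chi)$ directly, then close the gap to the stated level $\Gamma_{0}(4p^{2})$ by checking the transformation on a set of coset representatives of $\Gamma_{0}(4p^{2})/\Gamma_{0}(16p^{2})$ and using the $\epsilon$-condition to cancel the contributions of $F_{0}$ and $F_{p}$. Careful bookkeeping of the theta multiplier $\nu_{\vartheta}$ and its interaction with $\chi_{p}$ and $\chi$ is the delicate part; everything else is essentially a mechanical adaptation of the $|A|=p$ proof.
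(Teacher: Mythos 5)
Your overall strategy coincides with the paper's: prove the transformation law at level $16p^{2}$ via Borcherds' Theorem 5.2 (with the quadratic-reciprocity bookkeeping for the theta multiplier), then descend to $\Gamma_{0}(4p^{2})$ by checking coset representatives, and prove injectivity from disjointness of supports plus an argument eliminating $F_{0},F_{p}$. However, the mechanism you propose for the crucial descent step --- ``using the $\epsilon$-condition to cancel the contributions of $F_{0}$ and $F_{p}$'' --- is not what makes that step work, and as stated it does not make sense: $F_{0}$ and $F_{p}$ contribute nothing to $\varphi_{\chi}(F)$ to begin with, since $\chi(0)=\chi(p)=0$, and the $\epsilon$-condition only constrains the support of the expansion at $\infty$, which by itself does not yield invariance under the extra (lower-triangular) representatives. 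The paper takes the representatives $U_{4p^{2}j}=\left(\begin{smallmatrix}1 & 0\\ 4p^{2}j & 1\end{smallmatrix}\right)$, $j=0,1,2,3$, of $\Gamma_{0}(16p^{2})\backslash\Gamma_{0}(4p^{2})$, uses $4p\cdot U_{4p^{2}j}\tau = U_{pj}(4p\tau)$ with $U_{pj}=S^{-1}T^{-pj}S$, and expands with $\rho^{*}(S)$ and $\rho^{*}(T)$; the key point is that $\chi(\gamma)$ depends only on the mod-$p$ component of $\gamma$, so averaging over the $\Z/2\Z$-component annihilates precisely those $\beta$ with $pQ(\beta)\notin\Z$, i.e.\ the terms carrying the nontrivial phase $\mathbf{e}(jpQ(\beta))$ coming from $T^{-pj}$; orthogonality then reproduces $\varphi_{\chi}(F)$ exactly. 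Without a computation of this kind your argument stops at level $16p^{2}$, so this is the gap you need to fill; everything else in your first part (the identification of the obstruction, the level-$16p^{2}$ argument, holomorphy/meromorphy, the $\epsilon$-condition) agrees with the paper.

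On injectivity, your argument is essentially the paper's, but the case split and the appeal to $\sig(A,Q)$ being odd are unnecessary and slightly misplaced: after $F_{\gamma}=0$ for $\gamma\notin\{0,p\}$, applying the $S$-transformation twice gives $i^{2k}F_{0}=\frac{\mathbf{e}(\sig(A,Q)/4)}{p}F_{0}$ and likewise for $F_{p}$, and these force $F_{0}=F_{p}=0$ simply because the right-hand constant has modulus $1/p<1$ while the left-hand constant has modulus $1$ --- no parity considerations are needed (and in the antisymmetric case the vanishing is indeed automatic, as you say).
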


\begin{proof}
	We first show that that $\varphi_{\chi}(F)$ transforms correctly under $\Gamma_{0}(16p^{2})$. For $M = \left( \begin{smallmatrix}a & b \\ c & d  \end{smallmatrix}\right) \in \Gamma_{0}(16p^{2})$ we let $N = \left( \begin{smallmatrix}a & 4pb \\ c/4p & d  \end{smallmatrix}\right)$. Then we compute
	\[
	\varphi_{\chi}(F)(M\tau) = \sum_{\gamma(2p)}\chi(\gamma)F_{\gamma}(4p\cdot M\tau) = \sum_{\gamma(2p)}\chi(\gamma)F_{\gamma}(N(4p\tau)).
	\]
	Using \cite{BReflection}, Theorem~5.2, we obtain
	\begin{align*}
	F_{\gamma}(N(4p\tau)) &= \bigg(\frac{c/4p}{d}\bigg)\bigg( \frac{d}{2p}\bigg)\mathbf{e}((1-d)\delta/8)(c\tau+d)^{k}F_{d^{-1}\gamma}(4p\tau).
	\end{align*}
	We compute
	\begin{align*}
	\bigg(\frac{c/4p}{d}\bigg)\bigg( \frac{d}{2p}\bigg)\mathbf{e}((1-d)\delta/8) &=\bigg(\frac{p}{d}\bigg)\bigg(\frac{d}{p}\bigg)\bigg( \frac{c}{d}\bigg)\bigg( \frac{2}{d}\bigg)\mathbf{e}((1-d)\delta/8) = \bigg(\frac{p}{d}\bigg)\bigg( \frac{d}{p}\bigg)\nu_{\vartheta}(M)^{-\delta}.
	\end{align*}
	By quadratic reciprocity we have $\big(\frac{p}{d}\big)\big( \frac{d}{p}\big) = 1$ if $p \equiv 1 \pmod 4$ and
	\[
	\bigg(\frac{p}{d}\bigg)\bigg( \frac{d}{p}\bigg) = \begin{cases}1, & \text{if } d \equiv 1 \pmod 4, \\
	-1, & \text{if } d\equiv 3 \pmod 4, \end{cases}
	\]
	if $p \equiv 3 \pmod 4$. This gives the stated transformation behaviour under $\Gamma_{0}(16p^{2})$.

	In order to show the transformation behaviour under $\Gamma_{0}(4p^{2})$, it suffices to check the transformation under the matrices $U_{4p^{2}j} = \left(\begin{smallmatrix}1 & 0 \\ 4p^{2}j &1 \end{smallmatrix}\right)$ for $j = 0,1,2,3$ since they represent $\Gamma_{0}(16p^{2})\backslash \Gamma_{0}(4p^{2})$. We compute
	\begin{align*}
	\varphi_{\chi}(F)(U_{4p^{2}j}\tau) &= \sum_{\gamma(2p)}\chi(\gamma)F_{\gamma}(4p\cdot U_{4p^{2}j} \tau) \\
	&=  \sum_{\gamma(2p)}\chi(\gamma)F_{\gamma}(U_{pj}(4p\tau)) \\
	&= \sum_{\gamma(2p)}\chi(\gamma)F_{\gamma}(S^{-1}T^{-pj}S(4p\tau)) \\
	&= (4p^{2}j\tau+1)^{k}\frac{1}{2p}\sum_{\gamma(2p)}\chi(\gamma)\sum_{\beta(2p)}\mathbf{e}(-(\beta,\gamma))\mathbf{e}(jpQ(\beta))\sum_{\mu(2p)}\mathbf{e}((\mu,\beta))F_{\mu}(4p\tau).
	\end{align*} 
	If we write $\gamma = p\gamma_{1}+\gamma_{2}$ with $\gamma_{1} \in \Z/2\Z$ and $\gamma_{2} \in \Z/p\Z$, and similarly for $\beta$, and use that $\chi$ only depends on $\gamma_{2}$, we see that the sum over $\gamma_{1}$ vanishes unless $\beta_{2} = 0$. This means that we can replace $\mathbf{e}(jpQ(\beta))$ by $1$ in the above sum. But then the sum over $\beta$ equals $2p$ if $\mu = \gamma$, and vanishes otherwise. Hence we get
	\begin{align*}
	\varphi_{\chi}(F)(U_{4p^{2}j}\tau)&=(4p^{2}j\tau+1)^{k}\sum_{\gamma(2p)}\chi(\gamma)F_{\gamma}(4p\tau) =(4p^{2}j\tau+1)^{k}\varphi_{\chi}(F)(\tau).
	\end{align*}
	This shows that $\varphi_{\chi}(F)$ transforms correctly under $\Gamma_{0}(4p^{2})$. It is easy to see that $\varphi_{\chi}$ is holomorphic on $\H$ and meromorphic at the cusps, and that it satisfies the $\epsilon$-condition.
	
	Now suppose that $\varphi_{\chi}(F) = 0$ for some $F \in A_{k}(\rho^{*})$. By comparing the index sets on which the components $F_{\gamma}$ are supported, we obtain that $F_{\gamma} = 0$ for $\gamma \notin \{0,p\}$. Then the transformation behaviour of $F$ under $S$ implies 
	\[
	F_{0}(-1/\tau) = \tau^{k}\frac{\mathbf{e}(\sig(A,Q)/8)}{\sqrt{2p}}(F_{0}(\tau)+F_{p}(\tau)), \qquad 	F_{p}(-1/\tau) = \tau^{k}\frac{\mathbf{e}(\sig(A,Q)/8)}{\sqrt{2p}}(F_{0}(\tau)-F_{p}(\tau)).
	\]
	Applying $\tau \mapsto -1/\tau$ a second time, we get $i^{2k}F_{0}= \frac{\mathbf{e}(\sig(A,Q)/4)}{p}F_{0}$ and $i^{2k}F_{p} = \frac{\mathbf{e}(\sig(A,Q)/4)}{p}F_{p}$, hence $F_{0} = F_{p} = 0$. Thus $F = 0$ and $\varphi_{\chi}$ is injective.
\end{proof}

We split $G = \sum_{n\gg-\infty}c(n)q^{n}\in A_{k}^{\epsilon}(16p^{2},\chi)$ into components
\[
G(\tau) = \sum_{\gamma(2p)}G_{\gamma}(\tau), \qquad G_{\gamma}(\tau) = \frac{1}{2}\sum_{\substack{n \in 4p(\Z - Q(\gamma)) \\ n \gg -\infty}}c(n)q^{n},
\]
and define its \textbf{component-wise twist} by a Dirichlet character $\psi$ mod $p$ by 
\[
G_{\psi}(\tau) = \sum_{\gamma(2p)}\psi(\gamma)G_{\gamma}(\tau).
\]

\begin{lem}
	Let $G \in A_{k}^{\epsilon}(16p^{2},\chi)$ and let $\psi$ be a Dirichlet character mod $p$. Then $G_{\psi} \in A_{k}^{\epsilon}(16p^{2},\psi\otimes \chi)$.
\end{lem}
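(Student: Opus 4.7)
The plan is to adapt the proof of Lemma~\ref{lem twisting}, accommodating the larger level $16p^2$ and the half-integral weight multiplier $\nu_\vartheta$. Fourier orthogonality on $\Z/4p\Z$, combined with the observation that for each $n$ with $c(n)\ne 0$ there are exactly two indices $\pm\gamma$ with $n\in 4p(\Z-Q(\gamma))$, lets me write
\[
G_\psi = \frac{1}{8p}\sum_{\gamma(2p)}\psi(\gamma)\sum_{j(4p)}\mathbf{e}(jQ(\gamma))\,G\Big|_k\begin{pmatrix}1 & j/(4p) \\ 0 & 1\end{pmatrix}.
\]

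For $M=\left(\begin{smallmatrix}a & b\\ c & d\end{smallmatrix}\right)\in\Gamma_0(16p^2)$, write $\ell := c/(16p^2)$ and set
\[
M'' := \begin{pmatrix}1 & j/(4p)\\0 & 1\end{pmatrix}M\begin{pmatrix}1 & -d^2 j/(4p)\\0 & 1\end{pmatrix}.
\]
A direct entry-by-entry check shows $M''\in\Gamma_0(16p^2)$, with lower-left still $c$ and lower-right $d'' = d - 4p\ell d^2 j \equiv d \pmod{4p}$. Applying the modularity of $G$ under $M''$ gives $G|_k M'' = \chi(d'')\,\nu_\vartheta(M'')^{\pm 1}\,G$, and this factor must be independent of $j$ so that it can be pulled out of the sums. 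The identity $\chi(d'') = \chi(d)$ is immediate since $d''\equiv d\pmod p$, while $\nu_\vartheta(M'') = \nu_\vartheta(M)$ reduces to $\varepsilon_{d''} = \varepsilon_d$ (clear from $d''\equiv d\pmod 4$) together with the Kronecker symbol identity $\big(\tfrac{c}{d''}\big) = \big(\tfrac{c}{d}\big)$.

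Once $\chi(d)\,\nu_\vartheta(M)^{\pm 1}$ is pulled out, the remaining sum contains the shifted form $G\big(\,\cdot\, + d^2 j/(4p)\big)$. Substituting $d^2 j\mapsto j$ modulo $4p$ (permissible since $d$ is coprime to $2p$) restores the shift by $j/(4p)$ and converts $\mathbf{e}(jQ(\gamma))$ into $\mathbf{e}(jQ(d^{-1}\gamma))$; the further substitution $\gamma\mapsto d\gamma$ in the outer sum produces the factor $\psi(d)$ and recovers $G_\psi$. Combining,
\[
G_\psi\Big|_k M = \psi\chi(d)\,\nu_\vartheta(M)^{\pm 1}\,G_\psi,
\]
which is the desired transformation law. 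Holomorphy and the $\epsilon$-condition are preserved automatically, since twisting by $\psi$ only rescales existing Fourier coefficients, so $G_\psi\in A_k^\epsilon(16p^2,\psi\otimes\chi)$.

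The main obstacle is the Kronecker symbol identity: writing $c = 16p^2\ell$ and using multiplicativity of the Kronecker symbol (the square $16p^2$ contributes trivially), it reduces to showing $\big(\tfrac{\ell}{u}\big) = 1$ with $u := 1 - 4p\ell dj \equiv 1\pmod{4p|\ell|}$, which follows from quadratic reciprocity after a short sign-and-parity case analysis. A cleaner conceptual route is to observe that the $\SL_2(\mathbb{R})$-identity $\left(\begin{smallmatrix}1 & j/(4p)\\0 & 1\end{smallmatrix}\right)M = M''\left(\begin{smallmatrix}1 & d^2 j/(4p)\\0 & 1\end{smallmatrix}\right)$ lifts to $\Mp_2(\mathbb{R})$ with matched branches of $\sqrt{c\tau+d}$ on both sides (a consequence of $cd^2\cdot j/(4p) + d'' = d$), which forces $\nu_\vartheta(M'') = \nu_\vartheta(M)$ without any direct Kronecker-symbol manipulation.
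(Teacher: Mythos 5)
Your proposal is correct and is exactly the argument the paper intends when it calls the lemma ``analogous to the proof of Lemma~\ref{lem twisting}'': average over the translates $\tau\mapsto\tau+j/4p$, conjugate by $M\in\Gamma_{0}(16p^{2})$ to get $M''$ with the same lower-left entry and $d''\equiv d\pmod{4p}$ (so $\chi(d'')=\chi(d)$ and $\varepsilon_{d''}=\varepsilon_{d}$), settle $\big(\tfrac{c}{d''}\big)=\big(\tfrac{c}{d}\big)$ by the Kronecker-symbol/reciprocity computation, and finish with the substitutions $d^{2}j\mapsto j$ and $\gamma\mapsto d\gamma$; this is sound, including your observation that $c\,d^{2}j/(4p)+d''=d$ removes any branch ambiguity in the half-integral weight slash. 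One caveat: the closing ``cleaner conceptual route'' is not a proof as stated---matching branches in $\Mp_{2}(\mathbb{R})$ does not by itself force $\nu_{\vartheta}(M'')=\nu_{\vartheta}(M)$, since the translations do not lie in the metaplectic cover of $\Gamma_{0}(4)$ where $\nu_{\vartheta}$ is defined as a multiplier system---so the Kronecker-symbol argument should remain the actual justification.
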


\begin{proof}
	The proof is analogous to the proof of Lemma~\ref{lem twisting}, so we leave the details to the reader.
\end{proof}

In contrast to the case $|A| = p$ we need another notion to describe the image of $\varphi_{\chi}$. We call $\gamma \in \Z/2p\Z$ even if $4pQ(\gamma)$ is even, and odd if $4pQ(\gamma)$ is odd. The \textbf{even} and \textbf{odd parts} of $G \in A_{k}^{\epsilon}(16p^{2},\chi)$ are defined by
\begin{align*}
	G^{\even}(\tau) = \sum_{\substack{\gamma (2p) \\ \gamma \, \even}}G_{\gamma}(\tau), \qquad G^{\odd}(\tau) = \sum_{\substack{\gamma (2p) \\ \gamma \, \odd}}G_{\gamma}(\tau).
\end{align*}
Note that taking the even and odd parts of $G$ commutes with component-wise twisting.

\begin{lem}
	If $G \in A_{k}^{\epsilon}(16p^{2},\chi)$ then $G^{\even},G^{\odd} \in A_{k}^{\epsilon}(16p^{2},\chi)$ as well.
\end{lem}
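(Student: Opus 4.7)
The plan is to observe that the even/odd decomposition of $G$ is really the decomposition of its Fourier expansion by the parity of the index $n$, and then to realize this projection by the elementary translation $\tau \mapsto \tau + \tfrac{1}{2}$.

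Writing $\gamma = p\gamma_{1}+\gamma_{2}$ with $\gamma_{1} \in \Z/2\Z$ and $\gamma_{2} \in \Z/p\Z$, one computes $4pQ(\gamma) = p\delta\gamma_{1}^{2} + 4\alpha\gamma_{2}^{2}$, which is even if and only if $\gamma_{1} = 0$. Hence for $n$ in the Fourier support of a form in $A_{k}^{\epsilon}(16p^{2},\chi)$ (so that the associated $\gamma_{2} \neq 0$), $n$ is even precisely when the underlying $\gamma$ is even (in fact then $n \equiv 0 \pmod 4$), and $n$ is odd precisely when the underlying $\gamma$ is odd. Consequently
\[
G^{\even}(\tau) = \tfrac{1}{2}\bigl(G(\tau) + G(\tau + \tfrac{1}{2})\bigr), \qquad G^{\odd}(\tau) = \tfrac{1}{2}\bigl(G(\tau) - G(\tau + \tfrac{1}{2})\bigr).
\]
The $\epsilon$-condition is purely a condition on Fourier support and so is automatically inherited from $G$ by both $G^{\even}$ and $G^{\odd}$. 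It therefore suffices to show that $H(\tau) := G(\tau + \tfrac{1}{2})$ lies in $A_{k}(16p^{2},\chi)$.

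For $M = \left(\begin{smallmatrix} a & b \\ c & d \end{smallmatrix}\right) \in \Gamma_{0}(16p^{2})$, set
\[
M''' := \begin{pmatrix} 1 & 1/2 \\ 0 & 1 \end{pmatrix} M \begin{pmatrix} 1 & -1/2 \\ 0 & 1 \end{pmatrix} = \begin{pmatrix} a + c/2 & b + (d-a)/2 - c/4 \\ c & d - c/2 \end{pmatrix}.
\]
Since $16p^{2} \mid c$ (so $c/2, c/4 \in \Z$) and $ad$ is odd (so $a \equiv d \pmod 2$), $M'''$ has integer entries and lies in $\Gamma_{0}(16p^{2})$. The matrix identity $MT^{1/2} = T^{1/2} M'''$ in $\SL_{2}(\R)$ gives $M\tau + \tfrac{1}{2} = M'''(\tau + \tfrac{1}{2})$, and applying the modular transformation law of $G$ to the right-hand side yields
\[
H(M\tau) = G\bigl(M'''(\tau+\tfrac{1}{2})\bigr) = \chi(M''')\,\nu_{\vartheta}(M''')^{\pm 1}\,(c\tau + d)^{k}\,H(\tau).
\]

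What remains, and is the main technical step, is to verify $\chi(M''')\nu_{\vartheta}(M''')^{\pm 1} = \chi(M)\nu_{\vartheta}(M)^{\pm 1}$. Since $c/2 \in 8p^{2}\Z$, $d - c/2 \equiv d \pmod p$ and so $\chi(M''') = \chi(M)$. For the theta multiplier $\nu_{\vartheta}(M) = \bigl(\tfrac{c}{d}\bigr)\varepsilon_{d}^{-1}$, the divisibility $8 \mid c/2$ gives $\varepsilon_{d - c/2} = \varepsilon_{d}$; and writing $c = 2^{a}c_{0}$ with $c_{0}$ odd and $a \geq 4$, the Kronecker symbol $\bigl(\tfrac{c}{\cdot}\bigr)$ is periodic of period dividing $\mathrm{lcm}(8, 4c_{0}) = 8c_{0}$ in its lower entry, and $8c_{0}$ divides $c/2 = 2^{a-1}c_{0}$ precisely because $a \geq 4$. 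Thus $\bigl(\tfrac{c}{d - c/2}\bigr) = \bigl(\tfrac{c}{d}\bigr)$, hence $\nu_{\vartheta}(M''') = \nu_{\vartheta}(M)$, which completes the proof. The reason the level $16p^{2}$ (rather than $4p^{2}$) appears here is precisely this invariance: one needs $16 \mid c$ both to absorb $c/2$ into the period of $\varepsilon_{d}$ and into the period of the Jacobi symbol in its lower entry.
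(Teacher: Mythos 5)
Your proof is correct and follows essentially the same route as the paper: the identity $G^{\even/\odd}(\tau)=\tfrac{1}{2}\bigl(G(\tau)\pm G(\tau+\tfrac12)\bigr)$ together with conjugation by the half-translation, your contribution being to spell out the invariance of the character and theta multiplier (using $16\mid c$) that the paper dismisses with ``which easily implies''. Only a cosmetic slip: the displayed identity should read $T^{1/2}M = M''' T^{1/2}$ rather than $MT^{1/2}=T^{1/2}M'''$, but since your explicit $M'''=T^{1/2}MT^{-1/2}$ and the relation $M\tau+\tfrac12 = M'''(\tau+\tfrac12)$ are correct, the argument is unaffected.
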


\begin{proof}
	We can write
	\[
	G^{\even}(\tau) = \frac{1}{2}\left(G(\tau)+ G(\tau+1/2) \right), \qquad G^{\odd}(\tau) = \frac{1}{2}\left(G(\tau)- G(\tau+1/2) \right).
	\]
	For $\left(\begin{smallmatrix}a & b \\ c & d\end{smallmatrix}\right) \in \Gamma_{0}(16p^{2})$ we have
	\[
	\begin{pmatrix}1 & 1/2 \\ 0 & 1\end{pmatrix}\begin{pmatrix}a & b \\ c & d\end{pmatrix}\begin{pmatrix}1 & -1/2 \\ 0 & 1\end{pmatrix} = \begin{pmatrix}a+c/2 & b+(d-a)/2-c/4 \\ c & d-c/2\end{pmatrix} \in \Gamma_{0}(16p^{2}),
	\]
	which easily implies $G(\tau+1/2) \in A_{k}^{\epsilon}(16p^{2},\chi)$. This proves the lemma.
\end{proof}

For $G \in A_{k}(16p^{2},\chi)$ we define the Atkin-Lehner involution 
\[
G|_{k}W_{16p^{2}} = (4p)^{-k}(-i\tau)^{-k}G(-1/16p^{2}\tau).
\]
Then $G|_{k}W_{16p^{2}} \in A_{k}(16p^{2},\overline{\chi})$ and $G|_{k}W_{16p^{2}}|_{k}W_{16p^{2}} = G$. In general, the Atkin-Lehner involution does not preserve the $\epsilon$-condition. We say that $G \in A_{k}^{\epsilon}(16p^{2},\chi)$ satisfies the \textbf{Atkin-Lehner condition} if its twists by all Dirichlet characters $\psi$ mod $p$ with $\psi \neq \overline{\chi}$ satisfy
\begin{align*}
	G_{\psi}|_{k}W_{16p^{2}} &= \overline{\psi\chi(2\alpha)}\frac{\sqrt{2}g(\psi\chi)}{\sqrt{p}}\mathbf{e}(\sig(A,Q)/8)i^{k}G_{\overline{\psi}\overline{\chi}^{2}}^{\even},
\end{align*}
where $g(\psi\chi) = \sum_{n (p)^{*}}\psi\chi(n)\mathbf{e}(n/p)$ is a Gauss sum. Let $A_{k}^{\epsilon,\AL}(16p^{2},\chi)$ be the subspace of $A_{k}^{\epsilon}(16p^{2},\chi)$ satisfying the Atkin-Lehner condition. Note that, after applying $W_{16p^{2}}$ and a short calculation, the Atkin-Lehner condition also implies that
\begin{align*}
(G_{\psi}^{\even}-G_{\psi}^{\odd})|_{k}W_{16p^{2}} &= \overline{\psi\chi(2\alpha)}\frac{\sqrt{2}g(\psi\chi)}{\sqrt{p}}\mathbf{e}(\sig(A,Q)/8)i^{k}G_{\overline{\psi}\overline{\chi}^{2}}^{\odd}
\end{align*}
for $\psi \neq \overline{\chi}$.

\begin{prop}\label{prop isomorphism 2p}
	The linear map
	\[
	\varphi_{\chi}: A_{k}(\rho^{*}) \to A_{k}^{\epsilon,\AL}(16p^{2},\chi), \qquad F(\tau) = \sum_{\gamma(2p)}F_{\gamma}(\tau)\e_{\gamma} \mapsto \sum_{\gamma(2p)}\chi(\gamma)F_{\gamma}(4p\tau),
	\]
	is an isomorphism. The inverse map is given by
	\[
	\varphi_{\chi}^{-1}: G(\tau) = \sum_{\gamma(2p)}G_{\gamma}(\tau) \mapsto \sum_{\substack{\gamma(2p) \\ \gamma \neq 0,p (2p)}}\overline{\chi(\gamma)}G_{\gamma}(\tau/4p)\e_{\gamma} + G_{0}(\tau/4p)\e_{0} + G_{p}(\tau/4p)\e_{p},
	\]
	where $G_{0}$ and $G_{p}$ are defined by
	\begin{align*}
	G_{0}(\tau) &= \frac{\sqrt{p}}{\sqrt{2}(p-1)}\mathbf{e}(-\sig(A,Q)/8)i^{-k}(G_{\overline{\chi}}^{\even}(\tau)+G_{\overline{\chi}}^{\odd}(\tau))|_{k}W_{16p^{2}} +\frac{1}{p-1} G_{\overline{\chi}}^{\even}(\tau),\\
	G_{p}(\tau) &= \frac{\sqrt{p}}{\sqrt{2}(p-1)}\mathbf{e}(-\sig(A,Q)/8)i^{-k}(G_{\overline{\chi}}^{\even}(\tau)-G_{\overline{\chi}}^{\odd}(\tau))|_{k}W_{16p^{2}} +\frac{1}{p-1} G_{\overline{\chi}}^{\odd}(\tau).
	\end{align*}
\end{prop}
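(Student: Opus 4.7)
The plan is to mirror the proof of Proposition~\ref{prop isomorphism p} closely, with the even/odd decomposition accommodating the two fixed indices $\{0,p\}\subset\Z/2p\Z$ under $\gamma\mapsto-\gamma$; the $|A|=p$ case had only the single fixed index $\gamma=0$, giving one correction term, whereas here two correction terms are required.

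First I would show that $G=\varphi_{\chi}(F)$ satisfies the Atkin-Lehner condition by a direct computation of $G_{\psi}|_{k}W_{16p^{2}}$ along the lines of Proposition~\ref{prop isomorphism p}. Substituting $G_{\gamma}=\chi(\gamma)F_{\gamma}(4p\,\cdot\,)$ for $\gamma\notin\{0,p\}$ and applying the $S$-transformation of $F$ reduces everything to the inner sum $\sum_{\gamma(2p)}\psi(\gamma)\chi(\gamma)\mathbf{e}(\langle\beta,\gamma\rangle)$, which factors because $\langle\beta,\gamma\rangle=\delta\beta_{1}\gamma_{1}/2+2\alpha\beta_{2}\gamma_{2}/p$ and $\psi\chi(\gamma)$ depends only on $\gamma_{2}$. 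The sum over $\gamma_{1}\in\Z/2\Z$ produces $1+(-1)^{\beta_{1}}$, extracting even $\beta$ (so that the even part of $G_{\overline{\psi\chi^{2}}}$ appears), while the sum over $\gamma_{2}$ is a Gauss sum, with a correction term $(p-1)F_{0}(4p\tau)$ in the $\beta=0$ slot precisely when $\psi=\bar\chi$. For $\psi\neq\bar\chi$ this correction drops out, yielding the stated Atkin-Lehner condition. Running the same calculation with $G_{\psi}^{\even}-G_{\psi}^{\odd}$ in place of $G_{\psi}$ replaces $1+(-1)^{\beta_{1}}$ by $1-(-1)^{\beta_{1}}$ and yields the derived identity for the odd part, together with an $F_{p}(4p\tau)$ correction at $\psi=\bar\chi$.

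For the converse, given $G\in A_{k}^{\epsilon,\AL}(16p^{2},\chi)$, I set $F_{\gamma}(\tau)=\overline{\chi(\gamma)}G_{\gamma}(\tau/4p)$ for $\gamma\notin\{0,p\}$ and define $F_{0}(4p\tau)=G_{0}(\tau)$, $F_{p}(4p\tau)=G_{p}(\tau)$ via the formulas in the proposition; these formulas are forced by solving the $\psi=\bar\chi$ versions of the two forward computations for $F_{0}(4p\tau)$ and $F_{p}(4p\tau)$. The $T$-transformation of $F=\sum F_{\gamma}\e_{\gamma}$ is immediate from the $\epsilon$-condition on $G$. For the $S$-transformation of a component $F_{\gamma_{0}}$ with $\gamma_{0}\notin\{0,p\}$, I would multiply the desired equation $F_{\gamma_{0}}(-1/\tau)=\tau^{k}\frac{\mathbf{e}(\sig(A,Q)/8)}{\sqrt{2p}}\sum_{\beta(2p)}\mathbf{e}(\langle\beta,\gamma_{0}\rangle)F_{\beta}(\tau)$ by $\chi(\gamma_{0})\psi(\gamma_{0})$ and sum over $\gamma_{0}$; the left-hand side collapses to $(-i\tau)^{k}G_{\psi}|_{k}W_{16p^{2}}(\tau/4p)$, and the right-hand side reassembles from the Atkin-Lehner identity (for $\psi\neq\bar\chi$) together with its odd counterpart, so that character orthogonality mod $p$ reads off the componentwise $S$-relation.

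The main technical step is verifying $F_{0}|_{k}S$ and $F_{p}|_{k}S$, which do not fall under the above orthogonality argument. One must check that $F_{0}|_{k}S=\tau^{k}\frac{\mathbf{e}(\sig(A,Q)/8)}{\sqrt{2p}}\sum_{\beta(2p)}F_{\beta}$ and the analogous formula for $F_{p}$ (with weights $\mathbf{e}(\langle\beta,p\rangle)=(-1)^{\beta_{1}}$) follow from the explicit definitions of $G_{0},G_{p}$. This uses $W_{16p^{2}}^{2}=\mathrm{id}$, the compatibility of $W_{16p^{2}}$ with the even/odd decomposition, and the reconstruction of $G_{\bar\chi}^{\even}|_{k}W_{16p^{2}}$ and $G_{\bar\chi}^{\odd}|_{k}W_{16p^{2}}$ from $G_{0},G_{p}$ to reduce the claim back to the $\psi=\bar\chi$ case of the forward computation. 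Once all transformation laws are verified, $\varphi_{\chi}^{-1}(G)\in A_{k}(\rho^{*})$; combining $\varphi_{\chi}\circ\varphi_{\chi}^{-1}=\mathrm{id}$ with the already-established injectivity of $\varphi_{\chi}$ completes the proof.
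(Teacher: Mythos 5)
Your proposal is correct and takes essentially the route the paper intends: the authors omit this proof precisely because it is the argument of Proposition~\ref{prop isomorphism p} adapted to $|A|=2p$, and your sketch supplies exactly those adaptations (the factorization $\langle\beta,\gamma\rangle=\delta\beta_{1}\gamma_{1}/2+2\alpha\beta_{2}\gamma_{2}/p$, the $\gamma_{1}$-sum producing the even/odd projections, the Gauss sum, the two correction terms at $\psi=\overline{\chi}$ forcing the formulas for $G_{0},G_{p}$, and the use of both the Atkin--Lehner identity and its odd counterpart so that character orthogonality mod $p$ can separate the two lifts $\gamma,\gamma+p$ of a class mod $p$). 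The only discrepancy is bookkeeping: the $\gamma_{1}$-sum makes the $\psi=\overline{\chi}$ correction $2(p-1)F_{0}(4p\tau)$ rather than $(p-1)F_{0}(4p\tau)$, which is exactly where the $\sqrt{2}$ in the stated formulas for $G_{0}$ and $G_{p}$ comes from.
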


\begin{proof}
	The proof is very similar to the proof of Proposition~\ref{prop isomorphism p}, so we omit it for brevity.
\end{proof}

\section{Application: the Doi-Naganuma lift and theta contraction} \label{sec Doi-Naganuma}

Let $K = \mathbb{Q}(\sqrt{p})$ for a prime $p \equiv 1 \pmod 4$ and let $\mathcal{O}_K$ be its ring of integers. Let $\mathcal{O}_K^{\#} = (1 / \sqrt{p}) \mathcal{O}_K$ be the dual lattice of $\mathcal{O}_K$ with respect to the trace and consider the finite quadratic module $(\mathcal{O}_K^{\#}/\mathcal{O}_K,-N_{K/\mathbb{Q}})$. It has order $p$ and signature $0$ mod $8$. Let $\lambda \in \mathcal{O}_K$ be any totally positive prime with norm $\ell = N_{K/\mathbb{Q}}(\lambda)$, and let $b \in \mathbb{Z}$ be any integer with $b^2 \equiv p \pmod {4\ell}$ (which exists by quadratic reciprocity). Since $\lambda$ is prime, one of $(b + \sqrt{p}) / 2\lambda, (b - \sqrt{p})/2\lambda$ is integral; it is then straightforward to show that $\{\lambda', (b \pm \sqrt{p})/2\lambda\}$ is a $\mathbb{Z}$-basis of $\mathcal{O}_K$ and 
\[
	\mathbf{S} = \begin{pmatrix} -2\ell & -b \\ -b & \frac{p - b^2}{2\ell} \end{pmatrix}
\]
is the Gram matrix of $-N_{K/\mathbb{Q}}$ in that basis. 

If $\ell \neq p$, then for each $r \in \Z/p\Z$ there exists a unique $a \in \Z/2\ell\Z$ with $r \equiv ab \pmod {2\ell}$. We fix a bijection $\mathcal{O}_{K}^{\#}/\mathcal{O}_{K}\cong \Z/p\Z$ by sending $r \in \Z/p\Z$ to the element 
\[
\gamma_{a,r}+\mathcal{O}_{K} = \frac{a \pm r/\sqrt{p}}{2 \lambda}+\mathcal{O}_{K} \in \mathcal{O}_{K}^{\#}/\mathcal{O}_{K}.
\]
If $\ell = p$, then we fix the bijection which identifies $a \in \mathbb{Z}/p\mathbb{Z}$ with $\gamma_a + \mathcal{O}_K = \frac{a(1 + \sqrt{p})}{2 \lambda}+\mathcal{O}_{K}$ instead. (In this case, one can always take $b = p$ and $\lambda = \varepsilon \sqrt{p}$ if $\varepsilon$ is the fundamental unit of $\mathcal{O}_K$.) \\

The \textbf{theta decomposition} identifies vector-valued modular forms for the dual Weil representation attached to $(\mathcal{O}_K^{\#}/\mathcal{O}_K,-N_{K/\mathbb{Q}})$ with vector-valued Jacobi forms of fractional index $p/4\ell$ for the dual Weil representation attached to the discriminant form with Gram matrix $(-2\ell)$ and a particular representation of the Heisenberg group, see e.g. \cite{W}. By setting the Heisenberg variable of those Jacobi forms equal to zero one obtains the \textbf{theta contraction}, a graded homomorphism between the modular forms $M_*(-N_{K/\mathbb{Q}})$ and $M_{*+1/2}((-2\ell))$ as graded modules over the ring $M_*(\mathrm{SL}_2(\mathbb{Z}))$ of scalar-valued modular forms. This was introduced by Ma \cite{M} in order to study the quasi-pullback of Borcherds products. Explicitly in terms of Fourier coefficients, it is the map 
\[
	\Theta : M_k(-N_{K/\mathbb{Q}}) \to M_{k+1/2}((-2\ell)), \quad \sum_{\gamma \in \mathcal{O}_K^{\#}/\mathcal{O}_K} \sum_{n \in \mathbb{Z} + N_{K/\mathbb{Q}}(\gamma)} c(n,\gamma) q^n \mathfrak{e}_{\gamma} \mapsto \sum_{a \in \mathbb{Z}/2\ell \mathbb{Z}} \sum_{n \in \mathbb{Z} + a^2 / 4 \ell} \tilde c(n,a) q^n \mathfrak{e}_a,
\]
where 
\[
	\tilde c(n,a) = \sum_{r \equiv ab \, (2 \ell)} c\left( n - \frac{r^2}{4 \ell p}, \gamma_{a,r}\right).
\]

Possibly the most important aspect of the theta contraction (which can be defined more generally) is that it fits into a commutative diagram involving the additive theta lift (of Oda and Rallis-Schiffmann) and restriction to Heegner divisors: letting $\Lambda$ be an even lattice of type $(2,b^-)$ such that $b^{-} \geq 2$ is greater than the Witt rank of $\Lambda$, and $\lambda^{\perp}$ the orthogonal complement of a primitive, negative-norm vector $\lambda \in \Lambda$, the natural pullback map $\mathrm{Res}$ for orthogonal modular forms satisfies
\begin{center} \begin{tikzpicture}[node distance=2.5cm, auto]
\node (A) {$S_{k + 1 - b^-/2}(\rho^*_{\Lambda})$};
\node (AA) [right of=A] {$ $};
\node (B) [right of=AA] {$S_{k}(O(\Lambda))$};
\node (C) [below of=A] {$S_{k+1 - (b^- - 1)/2}(\rho^*_{\lambda^{\perp}})$};
\node (D) [below of=B] {$S_{k}(O(\lambda^{\perp}))$};
\draw[->] (B) to node {$\text{Res}$} (D);
\draw[->] (A) to node {$\Theta$} (C);
\draw[->] (A) to node {$\text{Theta lift}$} (B);
\draw[->] (C) to node {$\text{Theta lift}$} (D);
\end{tikzpicture}
\end{center}
for $k \geq 2$.

For the Doi-Naganuma lift (i.e. $b^{-} =2$) this can be made very explicit. Recall that for a cusp form 
\[
	F(\tau) = \sum_{\gamma \in \mathcal{O}_K^{\#} / \mathcal{O}_K} \sum_{n \in \mathbb{Z} + N_{K/\mathbb{Q}}(\gamma)} c(n,\gamma) q^n \mathfrak{e}_{\gamma} \in S_k(-N_{K/\mathbb{Q}})
\]
of weight $k \ge 2$, the Doi-Naganuma lift is a Hilbert cusp form $\Phi_F$ of weight $k$ for $\mathrm{SL}_2(\mathcal{O}_K)$ with Fourier expansion 
\[
	\Phi_F(\tau_1,\tau_2) = \sum_{\substack{\nu \in \mathcal{O}_K^{\#} \\ \nu,\nu' > 0}}\sum_{n=1}^{\infty} c(\nu \nu', \nu) n^{k-1} \mathbf{q}^{n \nu}, \quad \text{where} \; \; \mathbf{q}^{\nu} = e^{2\pi i (\nu \tau_1 + \nu' \tau_2)}.
\]
Moreover $\Phi_F$ satisfies the graded symmetry $\Phi_F(\tau_1,\tau_2) = (-1)^k \Phi_F(\tau_2,\tau_1)$. 

With $\lambda$ as above, there is a natural restriction map onto a component of the Hirzebruch-Zagier curve $T_{\ell}$: 
\[
	\mathrm{Res} : S_k(\mathrm{SL}_2(\mathcal{O}_K)) \to S_{2k}(\Gamma_0(\ell)), \quad f(\tau_1,\tau_2) \mapsto f(\lambda \tau, \lambda' \tau).
\]
It turns out that $\mathrm{Res}(\Phi_{F})$ equals the Shimura lift
\[
\sum_{a=1}^{\infty}\sum_{n=1}^{\infty} \tilde c(a^2 / 4\ell, a) n^{k-1} q^{na}
\]
of the contracted form $\Theta F = \sum_{a,n}\tilde c(n,a)q^{n}\e_{a} \in S_{k+1/2}((-2\ell))$.

\begin{lem}\label{lem doi-naganuma-shimura}
	We have the following commutative diagram:
\begin{center} \begin{tikzpicture}[node distance=2.5cm, auto]
\node (A) {$S_{k}(-N_{K/\Q})$};
\node (AA) [right of=A] {$ $};
\node (B) [right of=AA] {$S_k(\SL_{2}(\mathcal{O}_{K}))$};
\node (C) [below of=A] {$S_{k+1/2}((-2\ell))$};
\node (D) [below of=B] {$S_{2k}(\Gamma_{0}(\ell))$};
\draw[->] (B) to node {$\mathrm{Res}$} (D);
\draw[->] (A) to node {$\Theta$} (C);
\draw[->] (A) to node {$\mathrm{Doi-Naganuma}$} (B);
\draw[->] (C) to node {$\mathrm{Shimura}$} (D);
\end{tikzpicture}
\end{center}
\end{lem}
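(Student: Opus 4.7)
The proof is a direct Fourier coefficient comparison, verifying that the restriction around the top of the diagram agrees term-by-term with the Shimura lift around the bottom. Let
\[
F = \sum_{\gamma}\sum_{n} c(n,\gamma)\, q^n\, \e_\gamma \in S_k(-N_{K/\Q}).
\]
Setting $\tau_1=\lambda\tau$, $\tau_2=\lambda'\tau$ in the Fourier expansion of $\Phi_F$ and using $\nu\tau_1+\nu'\tau_2=\mathrm{tr}(\nu\lambda)\tau$, we obtain
\[
\mathrm{Res}(\Phi_F)(\tau) = \sum_{\substack{\nu\in\mathcal{O}_K^\#\\ \nu,\nu'>0}}\sum_{n=1}^\infty c(\nu\nu',\nu)\, n^{k-1}\, q^{n\,\mathrm{tr}(\nu\lambda)}.
\]
Grouping by the exponent $m=n\,\mathrm{tr}(\nu\lambda)$, the $q^m$-coefficient is $\sum_{a\mid m}(m/a)^{k-1}\sigma(a)$, where $\sigma(a)=\sum_{\nu} c(\nu\nu',\nu)$ runs over totally positive $\nu\in\mathcal{O}_K^\#$ with $\mathrm{tr}(\nu\lambda)=a$. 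Comparing with the stated Shimura lift of $\Theta F$, namely $\sum_{a,n\ge 1}\tilde c(a^2/(4\ell),a)\,n^{k-1}\,q^{na}$, the lemma reduces to the single identity $\sigma(a)=\tilde c(a^2/(4\ell),a)$.

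The core step is parametrizing the set of totally positive $\nu\in\mathcal{O}_K^\#$ with $\mathrm{tr}(\nu\lambda)=a$. I will write $\nu=(a+s/\sqrt p)/(2\lambda)$ for $s\in\Z$; the positivity condition becomes $|s|<a\sqrt p$, and a one-line computation gives $N(\nu) = (a^2 p - s^2)/(4\ell p) = a^2/(4\ell) - s^2/(4\ell p)$, which is the index appearing in the theta contraction formula. The integrality condition $\nu\in\mathcal{O}_K^\#$ splits via the tower $\nu\lambda\in\mathcal{O}_K^\#\subset\tfrac{1}{\sqrt p}\mathcal{O}_K$ into $s\equiv a\pmod 2$, plus a mod-$\ell$ condition obtained from $\sqrt p\equiv \pm b\pmod\lambda$ in $\mathcal{O}_K/\lambda\cong\mathbb{F}_\ell$ (which is exactly the content of $b^2\equiv p\pmod{4\ell}$); by CRT these combine to $s\equiv ab\pmod{2\ell}$ for an appropriate sign of $b$. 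Under the paper's bijection $\mathcal{O}_K^\#/\mathcal{O}_K\cong\Z/p\Z$, such a $\nu$ reduces mod $\mathcal{O}_K$ to exactly $\gamma_{a,s}$.

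Plugging these identifications into $\sigma(a)$ produces
\[
\sigma(a) = \sum_{s\equiv ab\,(2\ell)} c\!\left(\frac{a^2}{4\ell}-\frac{s^2}{4\ell p},\; \gamma_{a,s}\right) = \tilde c\!\left(\frac{a^2}{4\ell},\, a\right),
\]
where the range $|s|<a\sqrt p$ is automatic because $F$ is cuspidal (so $c(n,\gamma)=0$ unless $n>0$). This is precisely the definition of the theta contraction, finishing the identification in the case $\ell\neq p$. The case $\ell=p$ runs identically after replacing the bijection $r\mapsto \gamma_{a,r}$ by the alternative bijection $a\mapsto\gamma_a = a(1+\sqrt p)/(2\lambda)$ provided in the setup.

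The main obstacle is the bookkeeping around sign conventions in Step 2: one must choose a square root of $p$ in $\mathcal{O}_K/\lambda$ consistently so that the $s$-parameter coming from the parametrization of $\nu$ aligns with the index $r$ in the fixed bijection $\mathcal{O}_K^\#/\mathcal{O}_K\cong\Z/p\Z$. Once this is pinned down the remaining computations are mechanical.
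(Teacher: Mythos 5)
Your proposal is correct and follows essentially the same route as the paper: restrict the Fourier expansion of $\Phi_F$ along $(\lambda\tau,\lambda'\tau)$, parametrize the totally positive $\nu \in \mathcal{O}_K^{\#}$ with $\mathrm{Tr}(\nu\lambda)=a$ as $\gamma_{a,r}$ with $r\equiv ab \pmod{2\ell}$ and $\nu\nu' = a^2/4\ell - r^2/4\ell p$, and match the result coefficientwise with the Shimura lift of $\Theta F$. The only difference is that you spell out the integrality and congruence verification (the $\bmod\ 2$ and $\bmod\ \ell$ conditions combined by CRT) which the paper simply asserts, and you note explicitly that cuspidality makes the positivity range automatic.
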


\begin{proof} Since the elements $\nu \in \mathcal{O}_K^{\#}$ with $\mathrm{Tr}(\nu \lambda) = a \in \mathbb{N}$ are exactly those of the form $\gamma_{a,r} = \frac{a \pm r / \sqrt{p}}{2 \lambda}$ with $r \equiv ab$ mod $2\ell$, we find 
\begin{align*} 
	\Phi_F(\lambda \tau, \lambda' \tau) &= \sum_{n=1}^{\infty} \sum_{\mathrm{Tr}(\nu \lambda) = a} c(\nu \nu', \nu) n^{k-1} q^{na} \\ &= \sum_{n=1}^{\infty} \sum_{a=1}^{\infty} \sum_{r \equiv ab \, (2\ell)} c \left( \frac{a^2}{4\ell} - \frac{r^2}{4 \ell p}, \frac{a \pm r / \sqrt{p}}{2 \lambda} \right) n^{k-1} q^{na} \\ &= \sum_{n,a=1}^{\infty} \tilde c(a^2 / 4\ell, a) n^{k-1} q^{na}, 
\end{align*}
i.e. $\mathrm{Res}(\Phi_{F})(\tau) = \Phi_F(\lambda \tau, \lambda' \tau)$ is the Shimura lift of the contracted form $\Theta F \in S_{k+1/2}((-2\ell)).$
\end{proof}

In this section we observe that this relationship takes a simple form in terms of twisted component sums of $F$ and $\Theta F$. Recall that for a $q$-series $f(\tau) = \sum_n c(n) q^n$ the Hecke operator $U_p$ is defined by 
\[
	f|U_p(\tau) = \sum_n c(pn) q^n = \sum_{n \equiv 0 \, (p)} c(n) q^{n/p}.
\]

\begin{prop}\label{prop theta contraction}~
	\begin{enumerate}
		\item[(i)] Suppose $\ell \ne p$. Let  $\psi_{\ell}$ and $\psi_p$ be Dirichlet characters modulo $\ell$ and $p$ with $\psi_{\ell}(-1) = \psi_{p}(-1) = (-1)^{k}$ and let $\chi$ be the Dirichlet character modulo $\ell p$ defined by $\chi(r) = \psi_{\ell}(r) \psi_p(r)$ for all $r \in \mathbb{Z}$. By abuse of notation, let $\psi_p$ denote the ``character" on the cosets $\mathcal{O}_K^{\#} / \mathcal{O}_K$ defined by setting $\psi_p((a \pm r / \sqrt{p}) / 2 \lambda) = \psi_p(r)$ for any $a,r \in \mathbb{Z}$ satisfying $r \equiv ab$ mod $2 \ell$. Then 
		\[
			\varphi_{\psi_{\ell}} (\Theta F)(\tau) = \frac{1}{2\psi_{\ell}(b)} \cdot \Big( \varphi_{\overline{\psi_p}} (F)(4\ell \tau) \cdot \vartheta_{\chi}(\tau) \Big) \Big| U_p
		\]
		where $\vartheta_{\chi}(\tau) = \sum_{r \in \mathbb{Z}} \chi(r) q^{r^2}$ is the twisted Jacobi theta series.
	\item[(ii)] Suppose $\ell = p$, and let $\psi_p$ be a Dirichlet character mod $p$ with $\psi_{p}(-1) = (-1)^{k}$. By abuse of notation, define $\psi_p$ on $\mathcal{O}_K^{\#}/\mathcal{O}_K$ by $\psi_p(a / \lambda) = \psi_p(2a)$, $a \in \mathbb{Z}/p\mathbb{Z}$, where $\lambda = \varepsilon \sqrt{p}$ and $\varepsilon$ is the fundamental unit of $\mathcal{O}_K$. Then $$\varphi_{\psi_p} (\Theta F)(\tau) = \Big( \varphi_{\psi_p} (F)(4p\tau) \cdot \vartheta(\tau) \Big) \Big| U_p,$$ where $\vartheta(\tau) = \sum_{r \in \mathbb{Z}} q^{r^2}.$
	\end{enumerate}
\end{prop}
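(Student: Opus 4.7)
The approach is to prove both identities by expanding both sides as scalar $q$-series and matching Fourier coefficients.

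For part (i), unfolding the LHS via the definition of $\tilde c(n,a)$ gives
\[
\varphi_{\psi_{\ell}}(\Theta F)(\tau) = \sum_{a \in \Z/2\ell\Z} \psi_{\ell}(a) \sum_{m \in \Z} \sum_{\substack{r \in \Z \\ r \equiv ab\,(2\ell)}} c\!\left(m + \tfrac{a^2 p - r^2}{4\ell p},\, \gamma_{a,r}\right) q^{4\ell m + a^2},
\]
which simplifies to a single sum over $r \in \Z$ since $a \equiv b^{-1} r \pmod{2\ell}$ is determined by $r$. For the RHS, I expand the product $\varphi_{\overline{\psi_p}}(F)(4\ell \tau) \cdot \vartheta_\chi(\tau)$ and apply $U_p$. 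The divisibility-by-$p$ constraint on the exponent forces $s^2 \equiv \bar r^2 \pmod p$, i.e.\ $s \equiv \pm \bar r \pmod p$. The two branches $s \equiv \pm \bar r$ collapse under $s \mapsto -s$ using the parity hypothesis $\psi_\ell(-1) = \psi_p(-1) = (-1)^k$, producing a factor $1 + \psi_\ell(-1)\psi_p(-1) = 2$ (matched by the $1/2$ in the statement) and cancelling the character $\overline{\psi_p}(\bar r)$.

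The heart of the matching is the parametrization $s = \bar r + pt$ and two identifications. First, $\psi_\ell(\bar r + pt) = \psi_\ell(b)\psi_\ell(a(\bar r) + bt)$, using $p \equiv b^2 \pmod \ell$ (which follows from $b^2 \equiv p \pmod{4\ell}$); the resulting $\psi_\ell(b)$ cancels the $1/\psi_\ell(b)$ in the statement. Second, $\gamma_{a(\bar r)+bt,\,\bar r + pt}$ and $\gamma_{a(\bar r),\bar r}$ represent the same coset in $\mathcal{O}_K^{\#}/\mathcal{O}_K$, since their difference $t(b+\sqrt p)/(2\lambda)$ is an integer multiple of the $\Z$-basis element $(b+\sqrt p)/(2\lambda) \in \mathcal{O}_K$. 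Under the change of variables $(r,a) = (\bar r + pt,\, a(\bar r) + bt)$, the LHS exponent $4\ell m + a^2$ and the unfolded RHS exponent $4\ell j + a(\bar r)^2 + 2\bar r t + pt^2$ become equal after absorbing the integer $tk' + t^2 l$ (where $ab = \bar r + 2\ell k'$ and $b^2 - p = 4\ell l$) into the summation variable, and the Fourier coefficients coincide term by term.

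The main subtlety is the $\bar r \equiv 0 \pmod p$ case on the LHS: these terms correspond to $r = pt$ with $\gamma_{bt,pt} = 0 \in \mathcal{O}_K^{\#}/\mathcal{O}_K$ and hence involve the component $F_0$, which is absent from the RHS. In the intended application of odd weight $k$, this is vacuous since the symmetry $F_{-\gamma} = (-1)^k F_\gamma$ forces $F_0 = 0$; for even $k$ one checks that the $t \leftrightarrow -t$ pairing together with $\psi_\ell(-1) = +1$ kills these extra terms as well. Part (ii) follows by the same strategy with the simpler identification $a \mapsto a(1+\sqrt p)/(2\lambda)$ for $\lambda = \varepsilon \sqrt p$ and $b = p$: since the character $\chi = \psi_p$ is supported on $\Z/p\Z$ alone, $\vartheta_\chi$ degenerates to the untwisted Jacobi $\vartheta$, and the normalizing factor $1/\psi_\ell(b)$ does not appear.
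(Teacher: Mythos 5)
Your core argument for part (i) is the same as the paper's: expand both sides as $q$-series, note that survival under $U_p$ forces the theta index to be $\equiv \pm r \pmod p$, use the evenness of $\chi$ (equivalently the pairing $(a,r)\mapsto(-a,-r)$ together with $F_{-\gamma}=(-1)^kF_\gamma$) to produce the factor $2$, and use $b^2\equiv p \pmod{4\ell}$ both for the character identity $\psi_\ell(r)=\psi_\ell(b)\psi_\ell(a)$ and for the coset identification $\gamma_{a+bt,\,r+pt}\equiv\gamma_{a,r}$ via the basis vector $(b\pm\sqrt p)/2\lambda$. Your substitution $s=\bar r+pt$ is just a more explicit bookkeeping of the same coefficient matching, and your treatment of part (ii) agrees with the paper's.

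The one genuinely incorrect step is your handling of the terms with $p\mid r$ (those with $\gamma_{a,r}\in\mathcal{O}_K$, i.e.\ the $F_0$-contributions inside $\tilde c(n,a)$) in the even-$k$ case. Under the pairing $a\mapsto -a$, $r\mapsto -r$ these contributions acquire the factor $\psi_\ell(-1)$, which for even $k$ equals $+1$: the paired terms add, they are not killed. They vanish precisely in the odd-$k$ (antisymmetric) situation, where $F_0=0$ and also $\psi_\ell(-1)=-1$ makes the pairing destructive. For even $k$ with $F_0\neq 0$, the coefficient of $q^{4\ell n}$ on the left contains the extra quantity $2\psi_\ell(a)\sum_{r\equiv ab\,(2\ell),\ p\mid r}c\bigl(n-r^2/4\ell p,0\bigr)$, which has no counterpart on the right, since both $\overline{\psi_p}(0)=0$ and $\chi(r)=0$ for $p\mid r$ remove such terms there; so your argument for that case breaks down, and the identity itself needs these sums to vanish. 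The fix is simply to restrict to odd $k$ (the paper's intended application) or to add the hypothesis $F_0=0$. Note that the paper's own proof makes the same tacit assumption at the step where $\overline{\psi_p(\gamma_{a,r})}\chi(r)$, which is $0$ for $p\mid r$, is replaced by $\psi_\ell(ab)$; so apart from the erroneous even-$k$ remark, your proof coincides with the paper's.
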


\begin{proof} ~
	\begin{enumerate}
		\item[(i)] Write $f(\tau) = \varphi_{\overline{\psi_p}} (F)(\tau) = \sum_{\gamma \in \mathcal{O}_K^{\#} / \mathcal{O}_K} \sum_{n \in \mathbb{Z} + N_{K/\mathbb{Q}}(\gamma)} \overline{\psi_p(\gamma)} c(n,\gamma) q^{pn}$. In the product 
		\begin{align*} 
			f(4 \ell \tau) \vartheta_{\chi}(\tau) &= \Big( \sum_{\gamma,n} \overline{\psi_p(\gamma)} c(n,\gamma) q^{4\ell pn} \Big) \Big( \sum_{r=-\infty}^{\infty} \chi(r) q^{r^2} \Big) \\ &= \sum_{\gamma} \sum_{n,r} \overline{\psi_p(\gamma)} \chi(r) c(n, \gamma) q^{4 \ell pn + r^2}, 
			\end{align*} 
			we get exponents which are divisible by $p$ only when $n \in \mathbb{Z} + a^2 / 4\ell$ and $r \equiv \pm ab \, (2\ell)$ for some $a \in \mathbb{N}$ (which is uniquely determined mod $2\ell$). In this case $\gamma \in \gamma_{a,r} + \mathcal{O}_K$ with $\gamma_{a,r} = \frac{a \pm r/\sqrt{p}}{2 \lambda}$ as before. Applying the $U_p$ operator yields 
			\begin{align*} 
			\Big( f(4 \ell \tau) \vartheta_{\chi}(\tau) \Big) \Big| U_p &= 2 \sum_{a \in \mathbb{Z}/2\ell \mathbb{Z}} \sum_{n \in \mathbb{Z} + a^2 / 4\ell} \sum_{r \equiv ab \, (2 \ell)} \overline{\psi_p(\gamma_{a,r})} \chi(r) c(n - r^2 / 4p \ell, \gamma_{a,r}) q^{4 \ell n} \\ &= 2\sum_{a,n,r} \psi_{\ell}(ab) c(n-r^2 / 4p \ell, \gamma_{a,r}) q^{4 \ell n} \\ &= 2\psi_{\ell}(b) \sum_{a,n} \psi_{\ell}(a) \tilde c(n,a) q^{4 \ell n} \\ &= 2\psi_{\ell}(b) \varphi_{\psi_{\ell}} (\Theta F)(\tau).  
			\end{align*}
	\item[(ii)] This is proved similarly to part (i). We do not need to divide by two, since the sum over $r$ runs through only one congruence class (namely, $r \equiv ap \, (2p)$). The definition of $\psi_p$ on $\mathcal{O}_K^{\#}/\mathcal{O}_K$ is such that $\psi_p(\gamma_a) = \psi_p(\frac{a (1+p)}{2\lambda}) = \psi_p(a)$ for all $a \in \mathbb{Z}/2p\mathbb{Z}$. \qedhere
	\end{enumerate}
\end{proof}

If we abbreviate
\[
\Theta_{\chi} G = \Big( G(4\ell \tau) \cdot \vartheta_{\chi}(\tau) \Big) \Big| U_p
\]
for a scalar valued modular form $G$, then the first item of the proposition (the case $\ell \neq p$) can be illustrated by the diagram
\begin{center} \begin{tikzpicture}[node distance=2.5cm, auto]
\node (A) {$S_{k}(-N_{K/\Q})$};
\node (AA) [right of=A] {$ $};
\node (B) [right of=AA] {$S_k^{\epsilon,\AL}(p^{2},\overline{\psi_{p}}\otimes \chi_{p})$};
\node (C) [below of=A] {$S_{k+1/2}((-2\ell))$};
\node (D) [below of=B] {$S_{k+1/2}^{\epsilon,\AL}(\ell^{2},\psi_{\ell})$};
\draw[->] (B) to node {$\Theta_{\chi}$} (D);
\draw[->] (A) to node {$\Theta$} (C);
\draw[->] (A) to node {$\varphi_{\overline{\psi_{p}}}$} (B);
\draw[->] (C) to node {$\varphi_{\psi_{\ell}}$} (D);
\end{tikzpicture}
\end{center}
which commutes up to a constant factor. Note that the horizontal arrows are isomorphisms.

\begin{ex} Let $p = 5$. Fix the element $\lambda = 4+\sqrt{5}$ of norm $\ell = 11$, and fix $b = 7$. We fix the Dirichlet characters $\psi_{11}$ and $\psi_5$ by specifying $\psi_{11}(2) = e^{\pi i / 5}$ and $\psi_5(2) = i$. Up to scalar multiples there is a unique cusp form of (antisymmetric) weight $5$ for the dual Weil representation attached to $(\mathcal{O}_K,-N_{K/\mathbb{Q}})$ with $K = \mathbb{Q}(\sqrt{5})$, and it is 
\begin{align*} 
	F(\tau) &= ( q^{1/5} + 42q^{6/5} - 108q^{11/5} \- 4q^{16/5} - 378q^{21/5} \pm \ldots ) (\mathfrak{e}_{3 / \sqrt{5}} - \mathfrak{e}_{2/\sqrt{5}}) \\ &\quad+ (26q^{4/5} + 39q^{9/5} - 378q^{14/5} + 140q^{19/5} + 420q^{24/5} \pm \ldots) (\mathfrak{e}_{4/\sqrt{5}} - \mathfrak{e}_{1/\sqrt{5}}). 
\end{align*} 
One can compute $F$ using, for example, the algorithm described in \cite{W2} (compare the example of Section 7 there); and after enough coefficients have been computed, one can identify its twisted component sum in $S_5(\Gamma_1(25))$ using standard methods for computing scalar-valued modular forms. The Doi-Naganuma lift of $F$ is, up to a multiple, the well-known product $s_5$ of theta constants for $\mathbb{Q}(\sqrt{5})$ constructed by Gundlach (\cite{Gu}; see also the example of Section 4 of \cite{BB}). The character $\psi_5$ on $\mathcal{O}_K^{\#}/\mathcal{O}_K$ is defined such that e.g. \[
	\psi_5\left(1/\sqrt{5} + \mathcal{O}_K\right) = \psi_5 \left( \frac{1 - 7/\sqrt{5}}{2\lambda} + \mathcal{O}_K \right) = \psi_5(7) = i.
\]
Therefore the twisted component sum of $F$ by $\overline{\psi_{5}}$ is the cusp form 
\[
	\varphi_{\overline{\psi_5}} (F)(\tau) = 2q + 52i q^4 + 84q^6 + 78iq^9 - 216q^{11} - 756iq^{14} - 8q^{16} \pm \ldots \in S_5(\Gamma_0(25),\overline{\psi_5}\otimes \chi_5) = S_5(\Gamma_0(25),\psi_5).
\]
After multiplying 
\begin{align*} 
	\varphi_{\overline{\psi_5}}(F)(44\tau)\vartheta_{\chi}(\tau) &= \Big( 2q^{44} + 52i q^{176} \pm \ldots\Big) \Big( 2q + 2 \zeta_{20}^7 q^{4} - 2\zeta_{20}^9 q^{9} \pm \ldots \Big) \\ &= 4q^{45} + 4\zeta_{20}^7 q^{48} - 4\zeta_{20} q^{53} - 4 \zeta_{20}^4 q^{60} \pm \ldots 
	\end{align*}
and applying $U_{5}$ we get the series 
\[
	4q^9 - 4 \zeta_{20}^4 q^{12} + 4 \zeta_{20}^{18} q^{16}+ 4 \zeta_{20}^2 q^{25} - 104 \zeta_{20}^2 q^{36} \pm \ldots
\]
Dividing by $2 \psi_{11}(b) = -2\zeta_{20}^4$ yields the twisted component sum of the theta contraction $\Theta F$: 
\[
	\varphi_{\psi_{11}}(\Theta F)(\tau) = 2\zeta_{20}^6 q^9 + 2q^{12} + 2 \zeta_{20}^4 q^{16} - 2\zeta_{20}^{18} q^{25} + 52 \zeta_{20}^{18} q^{36} \pm \ldots
\]
From this we can read off the Shimura lift of the underlying vector-valued modular form $\Theta F$: the coefficient of $q^n$ is zero if $11 | n$, and otherwise $\sum_{d | n} \frac{1}{2\psi_{11}(d)} (n/d)^{5-1} c(d^2)$ if $c(n)$ is the coefficient of $q^n$ in $\varphi_{\psi_{11}}(\Theta F)(\tau)$, so 
\begin{align*} s_5(\lambda \tau, \lambda' \tau) 
&= -q^{3} + q^{4} + q^{5} + 10q^{6} - 10q^{8} - 121q^{9} + 98q^{10} + 275q^{12} + 32q^{13} + 140q^{14} \pm \ldots \in S_{10}(\Gamma_0(11)).
\end{align*}
\end{ex}

\begin{ex} Let $p = 13$. Fix the totally positive element $\lambda = \frac{13 + 3\sqrt{13}}{2}$ of norm $\ell = 13$ and fix $b=13$. We fix an odd Dirichlet character $\psi_{13}$ mod $13$ by specifying $\psi_{13}(2) = \zeta_{12} = e^{\pi i / 6}$. The dual Weil representation attached to $(\mathcal{O}_K,-N_{K/\mathbb{Q}})$, $K = \mathbb{Q}(\sqrt{13})$ admits up to scalar multiples a unique cusp form of weight $3$: \begin{align*} F(\tau) &= (q^{1/13} - 33q^{14/13} + 27q^{27/13} + 33q^{40/13} \pm ...) (\mathfrak{e}_{1/\lambda} - \mathfrak{e}_{12/\lambda}) \\ &+ (3q^{3/13} + 5q^{16/13} + 42q^{29/13} - 99q^{42/13} \pm ...) (\mathfrak{e}_{4/\lambda} - \mathfrak{e}_{9/\lambda}) \\ &+ (-7q^{4/13} - 3q^{17/13} - 33q^{30/13} + 49q^{43/13} \pm ...) (\mathfrak{e}_{2/\lambda} - \mathfrak{e}_{11/\lambda}) \\ &+ (0q^{9/13} - 22q^{22/13} + 33q^{35/13} + 15q^{48/13} \pm ...) (\mathfrak{e}_{3/\lambda} - \mathfrak{e}_{10/\lambda}) \\ &+ (11q^{10/13} - 12q^{23/13} + 0q^{36/13} + 50q^{49/13} \pm ...) (\mathfrak{e}_{6/\lambda} - \mathfrak{e}_{7/\lambda}) \\ &+ (21q^{12/13} + 14q^{25/13} - 66q^{38/13} + 9q^{51/13} \pm ...) (\mathfrak{e}_{5/\lambda} - \mathfrak{e}_{8/\lambda}). \end{align*} Under the Doi-Naganuma lift, $F$ is mapped to the cusp form $\omega_3$ used by van der Geer and Zagier to compute the ring of Hilbert modular forms for $\mathcal{O}_K$ (\cite{GZ}, Section 10). The twisted component sum of $F$ by $\psi_{13}$ is $$\varphi_{\psi_{13}}(F)(\tau) = 2\zeta_{12}q + 6\zeta_{12}^3 q^3 - 14\zeta_{12}^2 q^4 - 22q^{10} - 42\zeta_{12}^4 q^{12} \pm ... \in S_3(\Gamma_0(169),\psi_{13} \otimes \chi_{13}).$$ With this we can compute $\omega_3(\lambda \tau, \lambda'\tau)$ as follows: multiply $$\varphi_{\psi_{13}}(F)(52\tau) \cdot \vartheta(\tau) = 2\zeta_{12} q^{52} + 4\zeta_{12} q^{53} + 4\zeta_{12}q^{56} \pm ...$$ and apply the Hecke operator $U_{13}$ to obtain $$\varphi_{\psi_{13}}(\Theta F)(\tau) = 2\zeta_{12} q^4 + 6\zeta_{12}^3 q^{12} - 14\zeta_{12}^2 q^{16} + 4\zeta_{12}q^{17} + 12\zeta_{12}^3 q^{25} \pm ...$$  and therefore the Shimura lift $$\omega_3(\lambda \tau, \lambda' \tau) = q^2 - 3q^4 - 6q^5 + 9q^6 - q^8 + 6q^9 + 57q^{10} \pm ... \in S_6(\Gamma_0(13)).$$
\end{ex}

\bibliographystyle{plainnat}
\bibliofont
\bibliography{\jobname}

\begin{thebibliography}{11}
\providecommand{\natexlab}[1]{#1}
\providecommand{\url}[1]{\texttt{#1}}
\expandafter\ifx\csname urlstyle\endcsname\relax
  \providecommand{\doi}[1]{doi: #1}\else
  \providecommand{\doi}{doi: \begingroup \urlstyle{rm}\Url}\fi

\bibitem[Borcherds(2000)]{BReflection}
Richard~E. Borcherds.
\newblock Reflection groups of {L}orentzian lattices.
\newblock \emph{Duke Math J.}, 104\penalty0 (2):\penalty0 319--366, 2000.

\bibitem[Bruinier and Bundschuh(2003)]{BB}
Jan~H. Bruinier and Michael Bundschuh.
\newblock On {B}orcherds products associated with lattices of prime
  discriminant.
\newblock \emph{Ramanujan J.}, 7:\penalty0 49--61, 2003.

\bibitem[Eichler and Zagier(1985)]{EZ}
Martin Eichler and Don Zagier.
\newblock \emph{The theory of {J}acobi forms}, volume~55 of \emph{Progress in
  Mathematics}.
\newblock Birkh\"auser Boston Inc., Boston, MA, 1985.

\bibitem[Geer and Zagier(1977)]{GZ}
Gerard van~der Geer and Don Zagier.
\newblock The {H}ilbert modular group for the field {$\mathbb{Q}(\sqrt{ 13})$}.
\newblock \emph{Invent. Math.}, 42:\penalty0 93--133, 1977.

\bibitem[Gundlach(1963)]{Gu}
Karl-Bernhard Gundlach.
\newblock Die {B}estimmung der {F}unktionen zur {H}ilbertschen {M}odulgruppe
  des {Z}ahlk\"orpers {$\mathbb{Q}(\sqrt{5})$}.
\newblock \emph{Math. Ann.}, 152:\penalty0 226--256, 1963.

\bibitem[Ma(2018)]{M}
Shouhei Ma.
\newblock Quasi-pullback of {B}orcherds products.
\newblock \emph{preprint, arXiv:1801.08333}, 2018.

\bibitem[Milnor and Husemoller(1973)]{MH}
John Milnor and Dale Husemoller.
\newblock \emph{Symmetric bilinear forms}.
\newblock Springer-Verlag, New York-Heidelberg, 1973.
\newblock Ergebnisse der Mathematik und ihrer Grenzgebiete, Band 73.

\bibitem[Williams(2018)]{W2}
Brandon Williams.
\newblock Computing antisymmetric modular forms and theta lifts.
\newblock \emph{preprint, arXiv:1810.13004}, 2018.

\bibitem[Williams(2019)]{W}
Brandon Williams.
\newblock Remarks on the theta decomposition of vector-valued {J}acobi forms.
\newblock \emph{J. Number Theory}, 197:\penalty0 250--267, 2019.

\bibitem[Zhang(2015)]{Z1}
Yichao Zhang.
\newblock An isomorphism between scalar-valued modular forms and modular forms
  for {W}eil representations.
\newblock \emph{Ramanujan J.}, 37:\penalty0 181--201, 2015.

\bibitem[Zhang(2017)]{Z2}
Yichao Zhang.
\newblock Half-integral weight modular forms and modular forms for {W}eil
  representations.
\newblock \emph{preprint, arXiv:1705.07546}, 2017.

\end{thebibliography}
\end{document}